\title[On the log HdR spectral sequence for K3 surfaces]{On the logarithmic Hodge-de Rham spectral sequence for curves on K3 surfaces}
\author{Daniel Bragg}
\begin{document}

\begin{abstract}
    We show that if $X$ is a supersingular K3 surface then there exists a curve $D$ on $X$ such that the logarithmic Hodge--de Rham spectral sequence for $(X,D)$ is nondegenerate.
\end{abstract}

\maketitle


\section{Introduction}

For a smooth projective variety $X$ over a field of characteristic $0$ the Hodge--de Rham spectral sequence
\[
    \mathrm{E}_1^{a,b}=\H^b(X,\Omega^a_X)\implies\H^{a+b}(X,\Omega^{\bullet}_X)=:\H^{a+b}_{\dR}(X)
\]
is degenerate. This need not be the case in positive characteristic, but nevertheless remains true in some important situations. A famous result of Deligne--Illusie gives degeneracy in degrees $<p$ if $X$ lifts over the ring of truncated Witt vectors $W_2$ \cite{MR894379}. Rudakov--Safarevic showed that if $X$ is a K3 surface in positive characteristic then the HdR spectral sequence is degenerate \cite{MR460344}. Shorter proofs were later found by Lang and Nygaard \cite{MR554382,MR585962}. This result is most difficult when $X$ is supersingular, and in this case it seems that the Hodge--de Rham spectral sequence is only barely degenerate, as various closely related spectral sequences tend to be nondegenerate, for example the slope spectral sequence \cite[II 7.2]{Ill79} and the conjugate spectral sequence \cite[IV 2.17]{illusie_raynaud}. In this paper we consider the logarithmic variant of the Hodge--de Rham spectral sequence. We say a divisor $D\subset X$ has \emph{strict normal crossings} (\emph{is snc}) if the irreducible components of $D$ are smooth and intersect transversely. Given an snc divisor $D\subset X$ there is an associated logarithmic de Rham complex $\Omega^{\bullet}_X(\log D)$ and a logarithmic Hodge--de Rham spectral sequence
\[
    \mathrm{E}_1^{a,b}=\H^b(X,\Omega^{a}_{X}(\log D))\implies\H^{a+b}(X,\Omega^{\bullet}_X(\log D)).
\]
This behaves similarly to the ordinary Hodge--de Rham spectral sequence in many ways; for instance, it is always degenerate in characteristic 0, and there is a logarithmic variant of Deligne--Illusie \cite[4.2]{MR894379}, \cite[10.21]{MR1193913}. However, we will show that if $X$ is a supersingular K3 surface then we may choose $D$ so that this spectral sequence is non-degenerate. More precisely, we characterize the divisors $D$ with this property as follows. We recall two discrete invariants attached to a K3 surface $X$ over an algebraically closed field $k$ of positive characteristic $p$. The first is the \emph{height}, denoted $h$, which is either an integer $1\leq h\leq 10$, or is equal to $\infty$. In the former case we say that $X$ has \emph{finite height}, in the latter case we say that $X$ is \emph{supersingular}. By the solution of the Tate conjecture for K3 surfaces, it is known that supersingularity is equivalent to $X$ having Picard number 22. The second is the \emph{Artin invariant}, denoted $\sigma_0$. If $X$ has finite height we take the convention that $\sigma_0=\infty$. If $X$ is supersingular then $\sigma_0$ is an integer satisfying $1\leq\sigma_0\leq 10$, characterized by the property that
\[
    \mathrm{disc}(\Pic(X))=-p^{2\sigma_0}.
\]
If $X$ is supersingular then the discriminant group of $\Pic(X)$ is killed by $p$, hence is an $\bF_p$-vector space of dimension $2\sigma_0$. It is naturally identified with the $\bF_p$-subspace $\N_0\subset\Pic(X)\otimes_{\bZ}\bF_p$ that is the image modulo $p$ of the subgroup of classes $L\in\Pic(X)$ having the property that $L.M$ is divisible by $p$ for all $M\in\Pic(X)$. For an snc divisor $D\subset X$ with irreducible components $D_1,\ldots,D_r$, we let $\N_D\subset\Pic(X)\otimes\bF_p$ denote the $\bF_p$-vector space generated by the classes of the $\ms O(D_i)$. Here is what we will prove:

\begin{theorem}\label{thm:main thm 1}
    Let $X$ be a K3 surface and $D\subset X$ an snc divisor.
    \begin{enumerate}
        \item If $X$ has finite height then the logarithmic Hodge--de Rham spectral sequence for $(X,D)$ is degenerate.
        \item If $X$ is supersingular of Artin invariant $\sigma_0$, then the logarithmic Hodge--de Rham spectral sequence for $(X,D)$ is nondegenerate if and only if $\dim_{\bF_p}(\N_D\cap\N_0)\geq\sigma_0$. Moreover, for any supersingular $X$ and any $r\geq\sigma_0$ there exists an snc divisor with exactly $r$ irreducible components having this property.
    \end{enumerate}
\end{theorem}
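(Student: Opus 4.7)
The plan is a dimension count: the log Hodge--de Rham spectral sequence degenerates iff $\sum_{a,b}\dim_k \H^b(X,\Omega^a_X(\log D)) = \sum_n\dim_k \H^n(X,\Omega^\bullet_X(\log D))$. To compute the $E_1$-page dimensions, I use the residue short exact sequences $0\to\Omega^p_X\to\Omega^p_X(\log D)\to\bigoplus_i (\iota_{D_i})_*\Omega^{p-1}_{D_i}\to 0$ (and their extension for $p=2$ involving the double strata) coming from the weight filtration on $\Omega^\bullet_X(\log D)$. The associated long exact sequences express each log Hodge number in terms of Hodge numbers of $X$, Hodge numbers of the components $D_i$, and the Chern class map $c_1^{\mathrm{Hdg}}\colon k^r\to\H^1(X,\Omega^1_X)$, $e_i\mapsto c_1(\ms O(D_i))$. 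To compute the abutment dimensions I use the weight spectral sequence $E_1^{-k,n+k}=\H^{n-k}(D^{(k)})\Rightarrow\H^n(X,\Omega^\bullet_X(\log D))$, whose $E_1$ involves cohomology of the strata and whose $d_1$ differentials are Gysin maps, the key one being the de Rham cycle class $c_1^{\mathrm{dR}}\colon k^r\to\H^2_{\dR}(X)$.

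After cancelling the contributions from $X$ and the strata, the defect $\sum_{a,b} h^{a,b}(\log D)-\sum_n \dim\H^n(X,\Omega^\bullet_X(\log D))$ reduces to a formula in the kernels and images of $c_1^{\mathrm{Hdg}}$ and $c_1^{\mathrm{dR}}$. Since $c_1^{\mathrm{Hdg}}$ is the composition of $c_1^{\mathrm{dR}}$ with the projection $F^1\H^2_{\dR}(X)\to F^1/F^2=\H^1(\Omega^1_X)$, one has $\ker c_1^{\mathrm{dR}}\subseteq\ker c_1^{\mathrm{Hdg}}$, and the defect is non-negative. The key algebraic identifications are: the kernel of $c_1^{\mathrm{Hdg}}\colon\Pic(X)\otimes\bF_p\to\H^1(\Omega^1_X)$ equals $\N_0$, as the Serre duality pairing on $\H^1(\Omega^1_X)$ pulls back to the mod $p$ intersection pairing on $\Pic(X)$, so the two radicals coincide; and for supersingular $X$ with Artin invariant $\sigma_0$, the kernel of $c_1^{\mathrm{dR}}\colon\Pic(X)\otimes\bF_p\to\H^2_{\dR}(X)$ is a $\sigma_0$-dimensional $\bF_p$-subspace $K\subseteq\N_0$, identified via a Tor-sequence analysis with the $p$-torsion of $\H^2_{\mathrm{cris}}(X/W)/c_1(\Pic(X))\otimes\bZ_p$, an elementary abelian $p$-group of order $p^{\sigma_0}$ by Ogus's theory of K3 crystals. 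For finite height $X$, a standard result of Nygaard--Ogus gives injectivity of $c_1^{\mathrm{Hdg}}$ on $\Pic(X)\otimes\bF_p$, so $\N_0=0$ and degeneracy follows, proving part (1). For supersingular $X$, a careful bookkeeping of the defect, reconciling $\bF_p$-dimensions in $\Pic(X)\otimes\bF_p$ with $k$-dimensions after tensoring the cycle class maps, yields degeneracy iff $\dim_{\bF_p}(\N_D\cap\N_0)<\sigma_0$, giving part (2).

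For the existence statement with $r\geq\sigma_0$, since $\N_0$ has $\bF_p$-dimension $2\sigma_0$ I choose $\sigma_0$ classes whose images in $\N_0/K$ form a basis (so that their span meets $K$ only in zero), represent each by a smooth irreducible curve (after translating within its $\Pic$-coset to a big and nef effective class by adding a sufficiently positive multiple of an ample class, then applying Bertini), and then add $r-\sigma_0$ further smooth curves meeting the existing components transversely; the resulting snc divisor $D$ satisfies $\N_D\supseteq$ the chosen $\sigma_0$-dimensional subspace of $\N_0$, so $\dim_{\bF_p}(\N_D\cap\N_0)\geq\sigma_0$. The main obstacle is the precise identification of $K$ as a $\sigma_0$-dimensional subspace of $\N_0$ and the defect computation matching the theorem's criterion, both of which rest on Ogus's crystalline description of supersingular K3 surfaces and the interplay between the cycle class, the Hodge filtration on $\H^2_{\dR}$, and the Frobenius structure on $\H^2_{\mathrm{cris}}$. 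A secondary technical obstacle is guaranteeing smoothness of the representing curves in small characteristic, which may require recourse to explicit constructions on known supersingular K3 models such as Kummer or Fermat surfaces.
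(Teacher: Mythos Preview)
Your overall dimension-count strategy is reasonable in principle, and your existence argument (lifting a basis of an $\bF_p$-subspace of $\N_0$ to very ample classes and applying Bertini) matches the paper's. However, the two ``key algebraic identifications'' at the heart of your argument are false, and this is not a minor bookkeeping issue but precisely the subtle phenomenon the theorem captures.

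You claim that $\ker\bigl(c_1^{\mathrm{dR}}\colon \Pic(X)\otimes\bF_p\to\H^2_{\dR}(X)\bigr)$ is a $\sigma_0$-dimensional $\bF_p$-subspace and that $\ker\bigl(c_1^{\mathrm{Hdg}}\colon \Pic(X)\otimes\bF_p\to\H^1(X,\Omega^1_X)\bigr)=\N_0$. In fact, by a result of Ogus (which the paper recovers as a special case of its Proposition~3.6), for any supersingular K3 surface the map $c_1\otimes\bF_p\colon \Pic(X)\otimes\bF_p\to\H^2_{\dR}(X)$ is \emph{injective}, and $c_1^{\H}\otimes\bF_p$ is injective whenever $\sigma_0\geq 2$. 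Your radical argument fails because $\N_0$ maps into the orthogonal complement of the image of $c_1^{\H}$ inside $\H^1(X,\Omega^1_X)$, but that image is a proper subspace, so orthogonality does not force vanishing. What does have a $\sigma_0$-dimensional kernel is the $k$-linear extension $c_1\otimes k\colon \Pic(X)\otimes_{\bZ}k\to\H^2_{\dR}(X)$; the paper calls this kernel $\varphi(\K)$. The entire point is that $\varphi(\K)$ is \emph{not} stable under the Frobenius-semilinear operator $\varphi$ on $\Pic(X)\otimes k$, so it does \emph{not} descend to an $\bF_p$-subspace of $\Pic(X)\otimes\bF_p$. The subspace $\N_D\otimes_{\bF_p}k$, by contrast, is $\varphi$-stable, and the question of when it meets $\varphi(\K)$ or $\K+\varphi(\K)$ nontrivially requires a genuine semilinear-algebra argument (the paper's Lemma~3.5 on how $\varphi$-stable subspaces of $\U=\N_0\otimes k$ intersect the flag $\U_m=\K+\varphi(\K)+\cdots+\varphi^{m-\sigma_0}(\K)$). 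Without this, the ``careful bookkeeping'' you allude to cannot be carried out: the defect you want to compute depends on $\dim_k\bigl((\N_D\otimes k)\cap\varphi(\K)\bigr)$ and $\dim_k\bigl((\N_D\otimes k)\cap(\K+\varphi(\K))\bigr)$, and these are not determined by $\bF_p$-linear data alone without the $\varphi$-stability lemma.

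The paper's route is more direct than a global Euler-characteristic comparison: it first shows (Proposition~2.6) that the only possibly nonzero differential in the log HdR spectral sequence is $\rd\colon\H^0(X,\Omega^1_X(\log D))\to\H^0(X,\Omega^2_X(\log D))$, then proves (via a Cartesian square relating this differential to the de Rham Chern class map) that it is nonzero iff $\F^2$ lies in the $k$-span of the $c_1(\ms O(D_i))$, and finally uses the semilinear analysis above to translate this into the condition $\dim_{\bF_p}(\N_D\cap\N_0)\geq\sigma_0$.
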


In fact, on any K3 surface the only possibly nontrivial differential in the log Hodge--de Rham spectral sequence is the differential
\begin{equation}\label{eq:interesting diff, preset}
    \rd:\H^0(X,\Omega^1_X(\log D))\to\H^0(X,\Omega^2_X(\log D))
\end{equation}
on the first page. So, the nondegeneracy of the log HdR spectral sequence amounts to the nontriviality of this particular differential. If this is the case then it turns out that the image of this differential is the one dimensional subspace $\H^0(X,\Omega^2_X)\subset\H^0(X,\Omega^2_X(\log D))$.

We prove \ref{thm:main thm 1} by showing that~\eqref{eq:interesting diff, preset} is nontrivial if and only if the line $\F^2\subset\H^2_{\dR}(X)$ is contained in the $k$-span of the first de Rham Chern classes of the line bundles $\ms O(D_1),\ldots,\ms O(D_r)$, where $\F^2$ is the second piece of the Hodge filtration. This is possible in the supersingular case because of the unusual phenomenon of the non-transversality of the Hodge filtration on $\H^2_{\dR}$ and the subspace generated by algebraic cycles. Let $X$ be a K3 surface and let $\P\subset\H^2_{\dR}(X)$ be the $k$-vector space generated by the image of the de Rham Chern character $c_1:\Pic(X)\to\H^2_{\dR}(X)$. It is always the case that $c_1$ factors through the first piece of the Hodge filtration $\F^1$, so $\P\subset\F^1$. Over the complex numbers, $c_1^{}$ also factors through the first piece of the conjugate filtration, which is a complex vector space having trivial intersection with $\F^2$. Thus $\mathrm{P}$ also has trivial intersection with $\F^2$. In positive characteristic, if $X$ has finite height one can argue similarly using the Newton--Hodge decomposition on the second crystalline cohomology of $X$, which yields the same consequence (see Proposition \ref{prop:easy cases}). However, if $X$ is supersingular, it turns out that $\F^2$ is always contained in $\P$, and in this case the nontriviality of~\eqref{eq:interesting diff, preset} depends on the position in $\P$ of the subspace generated by the $c_1(\ms O(D_i))$. We analyze this situation using some results of Ogus, and this leads to the conditions detailed in the theorem.

As a result of Theorem \ref{thm:main thm 1} we witness the following unusual behaviors of symplectic forms on supersingular K3 surfaces.

\begin{theorem}\label{thm:Liouville form}
    If $X$ is a supersingular K3 surface and $\omega\in\H^0(X,\Omega^2_X)$ is a symplectic form, then there exists an snc divisor $D\subset X$ and a meromorphic 1-form $\eta$ with log poles along $D$ such that $\rd\eta=\omega$. Furthermore, any such $D$ must have $\geq\sigma_0$ irreducible components.
\end{theorem}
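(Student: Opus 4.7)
The plan is to deduce Theorem \ref{thm:Liouville form} directly from Theorem \ref{thm:main thm 1} together with the observation (stated in the excerpt) that the only possibly nontrivial differential in the log Hodge--de Rham spectral sequence of a K3 pair $(X,D)$ is the first-page map
\[
    \rd:\H^0(X,\Omega^1_X(\log D))\to\H^0(X,\Omega^2_X(\log D)),
\]
and that when this differential is nonzero its image equals the one dimensional subspace $\H^0(X,\Omega^2_X)\subset\H^0(X,\Omega^2_X(\log D))$.

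For the existence statement, first I would apply the moreover clause of Theorem \ref{thm:main thm 1}(2) to produce an snc divisor $D\subset X$ with exactly $\sigma_0$ irreducible components (any $r\geq\sigma_0$ works) such that $\dim_{\bF_p}(\N_D\cap\N_0)\geq\sigma_0$; for such $D$ the log Hodge--de Rham spectral sequence is nondegenerate. By the observation above, the image of $\rd$ is all of $\H^0(X,\Omega^2_X)=k\cdot\omega$. Hence I can choose $\eta\in\H^0(X,\Omega^1_X(\log D))$ with $\rd\eta=c\omega$ for some $c\in k^\times$, and after rescaling $\eta$ by $c^{-1}$ we obtain a meromorphic $1$-form with log poles along $D$ satisfying $\rd\eta=\omega$, as desired.

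For the lower bound on the number of components, suppose conversely that $D\subset X$ is any snc divisor and $\eta$ a section of $\Omega^1_X(\log D)$ with $\rd\eta=\omega$. Then $\rd$ is nontrivial on $\H^0(X,\Omega^1_X(\log D))$, so the log Hodge--de Rham spectral sequence for $(X,D)$ is nondegenerate. Theorem \ref{thm:main thm 1}(2) then forces
\[
    \dim_{\bF_p}(\N_D\cap\N_0)\geq\sigma_0.
\]
Since $\N_D$ is generated by the $r$ classes $[\ms O(D_i)]$, we have $\dim_{\bF_p}\N_D\leq r$, and therefore $r\geq\dim_{\bF_p}(\N_D\cap\N_0)\geq\sigma_0$.

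There is essentially no obstacle to this deduction beyond Theorem \ref{thm:main thm 1} itself; the only point that requires a brief argument is the claim that the image of $\rd$ (when nonzero) equals $\H^0(X,\Omega^2_X)$, which is already asserted in the discussion immediately following Theorem \ref{thm:main thm 1} and which I would reference rather than reprove.
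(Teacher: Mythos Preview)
Your proposal is correct and follows essentially the same route as the paper. The paper's proof of the existence part is identical to yours, and for the ``furthermore'' clause the paper simply cites Theorem \ref{thm:main thm 2} (the equivalence of \eqref{item:sc1} and \eqref{item:sc5}, together with the remark that \eqref{item:sc1} forces $r\geq\sigma_0$), whereas you unpack this directly via Theorem \ref{thm:main thm 1}(2); the content is the same.
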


\begin{theorem}\label{thm:symplectic form theorem}
    If $X$ is a supersingular K3 surface and $\omega\in\H^0(X,\Omega^2_X)$ is a symplectic form, then there exists a Zariski open cover $\pi:U\to X$ such that $\pi^*\omega$ is exact.
\end{theorem}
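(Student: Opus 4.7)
The plan is to deduce this from Theorem~\ref{thm:Liouville form}, which supplies a global meromorphic primitive of $\omega$ with logarithmic poles; the rest of the work consists in killing those poles Zariski-locally. That theorem provides an snc divisor $D = D_1 + \cdots + D_r$ and a form $\eta \in \H^0(X, \Omega^1_X(\log D))$ with $\rd\eta = \omega$. On the complement $X \setminus D$ the form $\eta$ is already regular, so $\omega$ is exact there; what remains is to produce a regular primitive of $\omega$ on a Zariski open neighborhood of each point $p \in D$.

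The key ingredient is that the residue of $\eta$ along each $D_i$ is a constant. On an affine open $V$ meeting a single component $D_i$ and in which $D_i \cap V$ is cut out by a regular function $f_i$, write $\eta|_V = a \cdot \rd f_i / f_i + \gamma$ with $a,\gamma$ regular; then $\rd\eta|_V = f_i^{-1}\,\rd a \wedge \rd f_i + \rd\gamma$, so regularity of $\rd\eta = \omega$ forces $\rd a \wedge \rd f_i$ to vanish along $V \cap D_i$, making $a|_{V \cap D_i}$ locally constant. Each $D_i$ is smooth and irreducible by the snc hypothesis, so $V \cap D_i$ is connected, and hence $a|_{V \cap D_i}$ is a single constant $c_i \in k$. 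Now fix $p \in D$; snc together with $\dim X = 2$ forces $p$ to lie on at most two components, say $D_1$ and possibly $D_2$. Choose an affine $V \ni p$ disjoint from the remaining $D_j$ and on which each relevant $D_i$ is cut out by a regular function $f_i$, so that
\[
    \eta|_V = a_1 \frac{\rd f_1}{f_1} + a_2 \frac{\rd f_2}{f_2} + \beta
\]
with $a_i, \beta$ regular (the $a_2$-term being absent if $p \notin D_2$). Writing $a_i = c_i + f_i g_i$ for regular $g_i$, the $1$-form $\tilde\eta := g_1\,\rd f_1 + g_2\,\rd f_2 + \beta$ is regular on $V$, and $\eta|_V - \tilde\eta = c_1\,\rd f_1/f_1 + c_2\,\rd f_2/f_2$ is $\rd$-closed, so $\rd\tilde\eta = \omega|_V$.

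By quasi-compactness of $X$, finitely many such $V_\alpha$, together with $X \setminus D$, cover $X$; letting $U$ be their disjoint union and $\pi \colon U \to X$ the natural map produces the required Zariski open cover on which $\pi^*\omega$ is exact. The genuine substantive input is Theorem~\ref{thm:Liouville form}, and I do not expect any real obstacle beyond careful use of the snc hypothesis---smoothness and irreducibility of the $D_i$ to force constancy of residues, and transversality in dimension $2$ to limit crossings to at most two branches, so that the local expansion of $\eta$ has only one or two logarithmic terms.
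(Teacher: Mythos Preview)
Your overall strategy is sound and is genuinely different from the paper's: the paper exploits the freedom to move $D$ (choosing each component very ample, then varying $D$ in its linear systems so that finitely many complements $X\setminus D'$ cover $X$), whereas you keep $D$ fixed and locally subtract off the closed logarithmic part $\sum_i c_i\,\rd f_i/f_i$ to produce a regular primitive on a neighborhood of each point of $D$. Your route is more elementary in that it avoids any ampleness or Bertini-type input; the paper's route, on the other hand, needs no local analysis near $D$ at all.

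There is, however, a genuine gap in your argument for residue constancy. From $\rd a\wedge\rd f_i\equiv 0\pmod{f_i}$ you can conclude only that $\rd(a|_{D_i})=0$ in $\Omega^1_{D_i}$; in characteristic $p$ this says $a|_{D_i}$ is locally a $p$-th power, not locally constant (take $a=y^p$ in local coordinates $(f_i,y)$). The fix is simpler than your local computation: the residue of the \emph{global} section $\eta$ along $D_i$ is by definition an element of $\H^0(D_i,\ms O_{D_i})$, and since $D_i$ is proper and integral this is just $k$. So the residues are constants $c_i\in k$ automatically, and the rest of your argument---writing $a_i=c_i+f_ig_i$ and setting $\tilde\eta=\sum_i g_i\,\rd f_i+\beta$---goes through unchanged. (This constancy is exactly the content of Lemma~\ref{lem:it factors} in the paper.)
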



Neither of these can happen in characteristic 0. In fact, in characteristic 0 there cannot exist any nonempty open subset of $X$ over which $\omega$ is exact (see Remark \ref{rem:exact 2 form}). Generalizing the above, we show that any supersingular K3 surface has a distinguished collection of divisors with some remarkable properties.

\begin{theorem}\label{thm:main thm 2}
    Let $X$ be a supersingular K3 surface of Artin invariant $\sigma_0$, let $D\subset X$ be an snc divisor, and let $\omega\in\H^0(X,\Omega^2_X)$ be a symplectic form. The following are equivalent (and moreover on any supersingular $X$ there exist divisors with these properties).
    \begin{enumerate}
        \item\label{item:sc1} $\dim_{\bF_p}(\N_D\cap\N_0)\geq\sigma_0$.
        \item\label{item:sc2} The line $\F^2\subset\H^2_{\dR}(X)$ is contained in the $k$-span of the de Rham Chern classes $c_1(\ms O(D_i))$.
        \item\label{item:sc3} The $k$-subspace of $\H^1(X,\Omega^1_X)$ generated by the Hodge Chern classes $c_1^{\H}(\ms O(D_i))$ has dimension $<\dim_{\bF_p}(\N_D)$.
        \item\label{item:sc4} The log HdR spectral sequence for $(X,D)$ is nondegenerate.
        \item\label{item:sc5} There exists a meromorphic 1-form $\eta$ with log poles along $D$ such that $\rd\eta=\omega$.
        \item\label{item:sc7} The pair $(X,D)$ does not lift over $W_2$.
    \end{enumerate}
\end{theorem}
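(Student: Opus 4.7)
The plan is to weave conditions (1)--(5) and (7) into a network of equivalences anchored on Theorem~\ref{thm:main thm 1}. The equivalence (1) $\Leftrightarrow$ (4) is exactly Theorem~\ref{thm:main thm 1}(2), which also supplies the parenthetical existence assertion.

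For (2) $\Leftrightarrow$ (4) and (5) $\Leftrightarrow$ (4) I would invoke the two structural facts asserted in the introduction after Theorem~\ref{thm:main thm 1}: on a K3 surface the only possibly nontrivial differential in the log HdR spectral sequence is the map $\rd$ of~\eqref{eq:interesting diff, preset}, its nontriviality is equivalent to $\F^2 \subset k\langle c_1(\ms O(D_i))\rangle$, and when nonzero its image is the line $\H^0(X,\Omega^2_X)=k\cdot\omega$ inside $\H^0(X,\Omega^2_X(\log D))$. The first is verified by inspecting the remaining candidate differentials using $K_X=0$ and $\H^1(\ms O_X)=0$; the second is the central Hodge-theoretic computation of the paper, via the Poincar\'e residue sequence and a direct analysis of the map on $\H^0$ in terms of the Hodge filtration; the third then follows by identifying the image inside the target. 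Given all three, (4) $\Leftrightarrow$ (2) $\Leftrightarrow$ (5) is immediate.

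For (2) $\Leftrightarrow$ (3) I pass to Hodge Chern classes via the projection $\F^1 \to \F^1/\F^2 \cong \H^1(X,\Omega^1_X)$, under which $c_1$ becomes $c_1^{\H}$. Setting $V = k\langle c_1(\ms O(D_i))\rangle \subset \F^1$, the subspace $V \cap \F^2$ is either $0$ or all of $\F^2$ (the latter being condition (2)), and $\dim k\langle c_1^{\H}(\ms O(D_i))\rangle = \dim V - \dim(V\cap\F^2)$. To complete the comparison with condition (3), I need $\dim V = \dim_{\bF_p}\N_D$, i.e.\ the injectivity of the de Rham Chern class on the subspace of $\Pic(X)\otimes k$ generated by the $[\ms O(D_i)]$. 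This is a consequence of Ogus's description of the Chern class map on supersingular K3 surfaces together with the fact that $\Pic(X)$ is torsion free.

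Finally, (4) $\Leftrightarrow$ (7). The direction (4) $\Rightarrow$ (7) is the contrapositive of the logarithmic Deligne--Illusie theorem \cite[4.2]{MR894379}, \cite[10.21]{MR1193913}. For the converse, deformation theory for K3 gives that lifts of $X$ over $W_2$ form a torsor under $\H^1(X,T_X)$ (since $\H^2(T_X)=0$), and that $\ms O(D_i)$ lifts along $\tilde X$ iff an obstruction class in $\H^2(\ms O_X)=k$ vanishes; as $\H^1(\ms O_X)=0$ the smooth $D_i$ then lifts canonically with $\ms O(D_i)$. This obstruction is affine in the Kodaira--Spencer class of $\tilde X$, with linear part given by cup product against $c_1^{\H}(\ms O(D_i))$, so $(X,D)$ lifts iff the resulting affine system of $r$ equations in $\H^1(X,T_X)$ is simultaneously solvable. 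The main technical obstacle is the \emph{affine consistency} of this system: the constant terms (which depend on the choice of base lift) must respect the relations among the linear parts $c_1^{\H}(\ms O(D_i))$, and I expect to show this fails precisely when (3) holds, by analyzing the obstruction functional on the subspace $\ker c_1^{\H} \cap (\N_D \otimes k)$ and using Ogus's period map to match the lifting obstructions to the relations forced by (3).
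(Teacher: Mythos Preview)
Your chain for (1), (2), (4), (5) tracks the paper's route through Theorem~\ref{thm:degeneration criterion} and Proposition~\ref{prop:degen criterion for a surface}, and is fine.

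The argument for (2) $\Leftrightarrow$ (3) has a genuine error. You claim $\dim V = \dim_{\bF_p}\N_D$, i.e., that $c_1\otimes k$ is injective on the $k$-subspace $\A := \N_D\otimes_{\bF_p}k \subset \Pic(X)\otimes k$. This is false: the kernel of $c_1\otimes k$ on $\Pic(X)\otimes k$ is $\varphi(\K)$, of dimension $\sigma_0\geq 1$, and $\A\cap\varphi(\K)$ is nonzero precisely when $s_0:=\dim_{\bF_p}(\N_D\cap\N_0)>\sigma_0$. Do not confuse this with the injectivity of $c_1\otimes\bF_p$ on $\Pic(X)\otimes\bF_p$ (which Ogus does prove), nor with torsion-freeness of $\Pic(X)$; neither yields injectivity of the $k$-linear extension restricted to $\A$. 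The paper explicitly flags this subtlety: the equivalence of (2) and (3) asserts that the composite $\A\twoheadrightarrow\B\twoheadrightarrow\C$ fails to be injective iff the second arrow $\B\twoheadrightarrow\C$ does, \emph{even though $\A\twoheadrightarrow\B$ can itself be non-injective}. The correct argument is Proposition~\ref{prop:sigma condition}, which computes $\dim(\A\cap\varphi(\K))$ and $\dim(\A\cap(\K+\varphi(\K)))$ via the $\varphi$-stability of $\A\cap\U$ and the semilinear-algebra Lemma~\ref{lem:dimension bounds}; both vanish when $s_0<\sigma_0$, while when $s_0\geq\sigma_0$ they equal $s_0-\sigma_0$ and $s_0-\sigma_0+1$ respectively, so their difference is exactly what forces $\dim(\B\cap\F^2)$ to be $0$ or $1$ in lockstep with $\dim\C$ versus $\dim\A$.

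For (7), your sketch points in the right direction but the ``affine consistency'' you name as the main obstacle is left unresolved. The paper sidesteps it by proving $\neg(3)\Rightarrow\neg(7)$ directly: when the Hodge Chern classes of the $\ms O(D_i)$ span a subspace of dimension $\dim_{\bF_p}\N_D$, the versal deformation space of $(X,\ms O(D_1),\ldots,\ms O(D_r))$ over $W$ is formally smooth, and one checks (using $\H^1(D_i,\ms O(-D_i))=0$) that lifting the line bundles suffices to lift the divisors.
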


\begin{remark}
    Note that property~\eqref{item:sc1} of Theorem \ref{thm:main thm 2} can only hold for a divisor with $\geq\sigma_0$ irreducible components.
\end{remark}

\begin{example}
    Suppose that $X$ is supersingular of Artin invariant $\sigma_0=1$ and $D$ is a smooth irreducible curve whose class in $\Pic(X)$ is not divisible by $p$. Then condition~\eqref{item:sc1} is equivalent to the statement that $D.C$ is divisible by $p$ for all curves $C$ on $X$, condition~\eqref{item:sc2} is equivalent to the de Rham Chern class $c_1(\ms O(D))$ being contained in $\F^2$, and condition~\eqref{item:sc3} is equivalent to the vanishing of the Hodge Chern class $c^{\H}_1(\ms O(D))$.
\end{example}

 The existence parts of the above results are simple consequences of properties of the Picard lattice of a supersingular K3 surface and the existence of linear systems on surfaces with large degrees. It would be interesting to construct such curves explicitly in examples, say on a supersingular Kummer surface or on the Fermat quartic when $p\equiv 3\pmod{4}$. It would also be interesting to find some explicit descriptions of meromorphic symplectic potentials on supersingular K3 surfaces as in \ref{thm:Liouville form}.

\begin{remark}
    The properties of \ref{thm:main thm 2} are not invariant under linear equivalence of divisors. To see this, say $D=\sum_{i=1}^rD_i$ is an snc divisor for which~\eqref{item:sc2} holds. Let $A$ be a smooth irreducible divisor such that $D+A$ is snc and very ample and $c_1(\ms O(A))$ is not in the span of the $c_1(\ms O(D_i))$. Then $D+A$ also satisfies~\eqref{item:sc2}, because $\F^2$ is contained in the span of the Chern classes $c_1(\ms O(D_1)),\ldots,c_1(\ms O(D_r)),c_1(\ms O(A))$. But, if $B$ is a general divisor in the linear system $|D+A|$, then $B$ will be smooth and irreducible and $\F^2$ will not be in the span of $c_1(\ms O(B))$. What is true is that the properties of \ref{thm:main thm 2} are invariant under replacing an irreducible component of $D$ with an irreducible and linearly equivalent divisor. The equivalence of~\eqref{item:sc7} with the rest shows that the liftability of $(X,D)$ over $W_2$ is also not invariant under linear equivalence of divisors.
\end{remark}




\subsection{Summary of this paper.} In \S\ref{sec:logarithmic de rham} we recall some background on logarithmic differential forms and prove some general results concerning the log HdR spectral sequence. In \S\ref{sec:logarithmic differential forms on K3 surfaces} we consider the log HdR spectral sequence for divisors on a K3 surface. We give the proofs of the above results at the end of \S\ref{sec:logarithmic differential forms on K3 surfaces}.



\subsection{Acknowledgments}

The author thanks Ambrosi Emiliano for pointing out an error in an earlier version of this paper. The author was supported by NSF RTG Grant \#1840190.

\section{Logarithmic de Rham complexes}\label{sec:logarithmic de rham}
We recall the basic theory of sheaves of logarithmic differential forms and logarithmic de Rham complexes. We take Esnault--Viehweg as our reference \cite[\S2]{MR1193913}.
Let $k$ be an algebraically closed field, let $X$ be a smooth variety over $k$, and let $D\subset X$ be an snc divisor with irreducible components $D_1,\ldots,D_r$. Set $U=X\setminus D$ and let $j:U\hookrightarrow X$ be the inclusion. The \emph{sheaf of meromorphic 1-forms with logarithmic poles along $D$} is the subsheaf
\[
    \Omega^1_X(\log D)\subset j_*\Omega^1_U
\]
generated as an $\ms O_X$-module by $\Omega^1_X$ and by local sections of the form $\rd f_i/f_i$ ($i=1,\ldots,r$) where $f_i$ is a locally defined equation for $D_i$. We define $\Omega^a_X(\log D)\subset j_*\Omega^a_U$ to be the subsheaf generated as an $\ms O_X$-module by sections $\omega_1\wedge\cdots\wedge\omega_a$ for $\omega_1,\ldots,\omega_a\in\Omega^1_X(\log D)$. Equivalently, $\Omega^a_X(\log D)$ is the subsheaf of $\Omega^a_X(D)$ consisting of forms $\omega$ such that $\rd\omega\in\Omega^{a+1}_X(D)$. This characterization shows that the usual de Rham differentials restrict to maps
\[
    \rd:\Omega^a_X(\log D)\to\Omega^{a+1}_X(\log D).
\]
The \emph{logarithmic de Rham complex} associated to $(X,D)$ is the resulting complex
\[
    \Omega^{\bullet}_X(\log D)=\left[\ms O_X\xrightarrow{\rd}\Omega^1_X(\log D)\xrightarrow{\rd}\Omega^2_X(\log D)\xrightarrow{\rd}\ldots\right].
\]
The \emph{logarithmic Hodge--de Rham spectral sequence} associated to $(X,D)$ is the spectral sequence
    \begin{equation}\label{eq:log HdR ss}
        \mathrm{E}_1^{a,b}=\H^b(X,\Omega^a_X(\log D))\implies\H^{a+b}(X,\Omega^{\bullet}_X(\log D))
    \end{equation}
coming from the naive filtration on $\Omega^{\bullet}_X(\log D)$. Of similar importance are the twists $\Omega^a_X(\log D)(-D)$. These are also stable under $\rd$, and so form a complex
\[
    \Omega^{\bullet}_X(\log D)(-D)=\left[\ms O_X(-D)\xrightarrow{\rd}\Omega^1_X(\log D)(-D)\xrightarrow{\rd}\Omega^2_X(\log D)(-D)\xrightarrow{\rd}\ldots\right]
\]
whose naive filtration gives rise to a spectral sequence
    \begin{equation}\label{eq:log HdR ss dual}
         \mathrm{E}_1^{a,b}=\H^b(X,\Omega^a_X(\log D)(-D))\implies\H^{a+b}(X,\Omega^{\bullet}_X(\log D)(-D)).
    \end{equation}  
We have inclusions $\Omega^{\bullet}_X\subset\Omega^{\bullet}_X(\log D)$ and $\Omega^{\bullet}_X(\log D)(-D)\subset\Omega^{\bullet}_X$ of complexes. The \emph{residue map} is the unique map of $\ms O_X$-modules
\begin{equation}\label{eq:residue map}
    \res:\Omega^1_X(\log D)\to\bigoplus_{i=1}^r\ms O_{D_i}
\end{equation}
that kills $\Omega^1_X$ and sends a local section $\rd f_i/f_i$, where $f_i$ is an equation for $D_i$, to the unit $1_{D_i}\in\ms O_{D_i}$. We have a short exact sequence \cite[\S2, Proposition 2.3]{MR1193913}
\begin{equation}\label{eq:residue SES}
    0\to\Omega^1_X\to\Omega^1_X(\log D)\xrightarrow{\res}\textstyle\bigoplus_{i}\ms O_{D_i}\to 0.
\end{equation}
\begin{pg}[Serre duality for sheaves of logarithmic differentials]

If $X$ is smooth and proper of dimension $d$ then $\Omega^d_X(\log D)=\Omega^d_X(D)$. It follows that the wedge product defines a perfect pairing of $\ms O_X$-modules
\[
    \Omega^a_X(\log D)\otimes\Omega^{d-a}(\log D)(-D)\to\Omega^d_X,
\]
and so we have isomorphisms
\[
    \H^b(X,\Omega^a_X(\log D))\simeq\H^{d-b}(X,\Omega^{d-b}_X(\log D)(-D))^{\vee}.
\]
\end{pg}
    

    
%

\begin{pg}
    We will compute the differential
    \begin{equation}\label{eq:log differential}
        \rd:\H^0(X,\Omega^1_X(\log D))\to\H^0(X,\Omega^2_X(\log D))
    \end{equation}
    in terms of the de Rham cohomology of $X$ and the Chern classes of the $\ms O(D_i)$. Let $\ms F_D\subset\Omega^1_X(\log D)$ denote the subsheaf consisting of sections that may be locally written as $\tau+\lambda\frac{\rd f}{f}$, where $\tau\in\Omega^1_X$, $\lambda\in k$, and $f$ is a local equation for one of the $D_i$. Note that $\ms F_D$ is not an $\ms O_X$-submodule, but is a $k$-submodule.
    The residue map restricts to a map $\ms F_D\to\bigoplus_{i}\underline{k}_{D_i}$, where $\underline{k}_{D_i}\subset\ms O_{D_i}$ is the subsheaf $k\cdot 1_{D_i}$. This map is surjective, as each unit section $1_{D_i}$ may locally be realized as the residue of $\frac{\rd f_i}{f_i}$ where $f_i$ is a local equation for $D_i$. Thus we have a short exact sequence
    \[
        0\to\Omega^1_X\to\ms F_D\xrightarrow{\res}\textstyle\bigoplus_{i}\underline{k}_{D_i}\to 0.
    \]
    Note that, unlike $\Omega^1_X(\log D)$, the sheaf $\ms F_D$ has the property that $\rd$ of any section is regular (sections of the form $\lambda\frac{\rd f}{f}$ with $\lambda\in k$ are closed). Thus the differential restricts to a map $\rd:\ms F_D\to\Omega^2_X$.
\end{pg}
    
     \begin{lemma}\label{lem:it factors}
         If $X$ is smooth and proper over $k$, then the inclusion $\ms F_D\subset\Omega^1_X(\log D)$ induces an isomorphism on global sections, and the differential~\eqref{eq:log differential} factors through $\H^0(X,\Omega^2_X)$ (and even through $\H^0(X,\Z\Omega^2_X)$).
    \end{lemma}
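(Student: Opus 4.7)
The plan is to establish both assertions by comparing the short exact sequence for $\ms F_D$ to the sequence~\eqref{eq:residue SES} for $\Omega^1_X(\log D)$ and by exploiting the explicit local description of sections of $\ms F_D$.

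For the isomorphism on global sections, I would take the long exact cohomology sequences of the two short exact sequences, which fit into a commutative ladder via the inclusion $\ms F_D\hookrightarrow\Omega^1_X(\log D)$: the map is the identity on $\Omega^1_X$ and the inclusion $\bigoplus_i\underline{k}_{D_i}\hookrightarrow\bigoplus_i\ms O_{D_i}$ on the quotients. Because each $D_i$ is smooth, irreducible, and proper over the algebraically closed field $k$, one has $\H^0(D_i,\ms O_{D_i})=k$, which coincides with $\H^0(D_i,\underline{k}_{D_i})$. Thus the map of quotients is an isomorphism on $\H^0$. The five-lemma applied to the two long exact sequences then yields $\H^0(X,\ms F_D)\xrightarrow{\sim}\H^0(X,\Omega^1_X(\log D))$.

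For the factorization, the paragraph preceding the lemma already records that $\rd$ sends $\ms F_D$ into $\Omega^2_X$: a local section $\tau+\lambda\,\rd f/f$ with $\tau\in\Omega^1_X$ and $\lambda\in k$ is carried to $\rd\tau\in\Omega^2_X$ since the logarithmic piece is closed. Combined with $\rd^2=0$, the image in fact lands in the subsheaf $\Z\Omega^2_X$ of closed regular $2$-forms. Identifying $\H^0(X,\Omega^1_X(\log D))$ with $\H^0(X,\ms F_D)$ via the first part, the differential~\eqref{eq:log differential} factors as
\[
    \H^0(X,\Omega^1_X(\log D))\xrightarrow{\sim}\H^0(X,\ms F_D)\xrightarrow{\rd}\H^0(X,\Z\Omega^2_X)\hookrightarrow\H^0(X,\Omega^2_X)\hookrightarrow\H^0(X,\Omega^2_X(\log D)),
\]
which is the desired statement.

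The only subtlety worth flagging is that $\ms F_D$ is merely a sheaf of $k$-vector spaces and not of $\ms O_X$-modules, so the short exact sequence and its associated long exact sequence must be read in the category of sheaves of $k$-vector spaces. Since all of the comparison maps are $k$-linear and the five-lemma is a statement about abelian groups, this causes no difficulty. Beyond this piece of bookkeeping I do not anticipate any real obstacle.
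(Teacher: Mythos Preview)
Your proof is correct and follows essentially the same approach as the paper: both compare the short exact sequence for $\ms F_D$ to the residue sequence~\eqref{eq:residue SES} via the evident ladder, use properness of the $D_i$ to identify $\H^0(\bigoplus_i\underline{k}_{D_i})\simeq\H^0(\bigoplus_i\ms O_{D_i})$, and then read off the factorization from the fact that $\rd$ already carries $\ms F_D$ into $\Omega^2_X$. Your explicit invocation of the five-lemma and of $\rd^2=0$ for the $\Z\Omega^2_X$ refinement are just slightly more detailed versions of what the paper leaves implicit.
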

    \begin{proof}
        Consider the diagram
        \[
            \begin{tikzcd}
                0\arrow{r}&\Omega^1_X\arrow[equals]{d}\arrow{r}&\ms F_D\arrow[hook]{d}\arrow{r}{\res}&\bigoplus_{i}\underline{k}_{D_i}\arrow[hook]{d}\arrow{r}&0\\
                0\arrow{r}&\Omega^1_X\arrow{r}&\Omega^1_X(\log D)\arrow{r}{\res}&\bigoplus_{i}\ms O_{D_i}\arrow{r}&0
            \end{tikzcd}
        \]
        with exact rows. The first claim follows by taking cohomology and noting that because the $D_i$ are proper the right vertical inclusion induces an isomorphism on $\H^0$. The second claim follows from the diagram
        \[
            \begin{tikzcd}
                \H^0(X,\ms F_D)\arrow[d, sloped, "\sim"]\arrow{r}{\rd}&\H^0(X,\Omega^2_X)\arrow[hook]{d}\\
                \H^0(X,\Omega^1_X(\log D))\arrow{r}{\rd}&\H^0(X,\Omega^2_X(\log D)).
            \end{tikzcd}
        \]
    \end{proof}

    \begin{remark}
        If $X$ is proper of dimension $d$, then the Serre dual statement to Lemma \ref{lem:it factors} is that the differential
        \[
            \rd:\H^d(X,\Omega^{d-2}_X(\log D)(-D))\to\H^d(X,\Omega^{d-1}_X(\log D)(-D))
        \]
        factors through the surjection $\H^d(X,\Omega^{d-2}_X(\log D)(-D))\twoheadrightarrow\H^d(X,\Omega^{d-2}_X)$.
    \end{remark}
    
    Set $\Omega^{[1,2)}_X=[\Omega^1_X\xrightarrow{\rd}\Z\Omega^2_X]$ and $\C=[\ms F_D\xrightarrow{\rd}\Z\Omega^2_X]$, both with terms in degrees 1 and 2. The diagram
    \begin{equation}\label{eq:SES of complexes}
        \begin{tikzcd}
            0\arrow{r}&\Omega^1_X\arrow{d}{-\rd}\arrow{r}&\ms F_D\arrow{d}{-\rd}\arrow{r}{\res}&\bigoplus_{i}\underline{k}_{D_i}\arrow{r}&0\\
            &\Z\Omega^2_X\arrow[equals]{r}&\Z\Omega^2_X
        \end{tikzcd}
    \end{equation}
    describes a short exact sequence of complexes
    \[
        0\to\Omega^{[1,2)}_X[1]\to\C[1]\to\textstyle\bigoplus_{i}\underline{k}_{D_i}\to 0
    \]
    (here we are following the sign conventions of The Stacks Project \cite[0FNG]{stacks-project}, so $\Omega^{[1,2)}[1]$ and $\C[1]$ have differential $-\rd$). 
    We write $\delta:\bigoplus_{i}\underline{k}_{D_i}\to\Omega^{[1,2)}_X[2]$ for the corresponding morphism in the derived category. We have a commutative diagram
        \begin{equation}\label{eq:diagram in derived cat}
            \begin{tikzcd}
                \C[1]\arrow{r}\arrow[equals]{d}&\ms F_D\arrow{d}[swap]{\res}\arrow{r}{-\rd}&\Z\Omega^2_X\arrow{d}\arrow{r}&\C[2]\arrow[equals]{d}\\
                \C[1]\arrow{r}&\bigoplus_{i}\underline{k}_{D_i}\arrow{r}{\delta}&\Omega^{[1,2)}_X[2]\arrow{r}&\C[2]
            \end{tikzcd}
        \end{equation}
        in the derived category in which the rows are distinguished triangles. 
    
    Consider the map of sheaves 
    \[
        \mathrm{dlog}:\bG_m\to\Omega^1_X,\quad g\mapsto\rd g/g.
    \]
    The \emph{Hodge Chern character} $c_1^{\H}$ is the induced map on $\H^1$. The $\mathrm{dlog}$ map factors through the subsheaf of closed forms, and so defines maps of complexes $\bG_m[-1]\to\Omega^{[1,2)}_X$ and $\bG_m[-1]\to\Omega^{\bullet}_X$. We refer to either of the induced maps on $\H^2$ as the \emph{de Rham Chern character}, denoted $c_1$, and hope that this will not cause any confusion. These maps are compatible via the diagram
    \[
        \begin{tikzcd}
            &\Pic(X)\arrow{dl}[swap]{c_1}\arrow{d}{c_1}\arrow{dr}{c_1^{\H}}&\\
            \H^2_{\dR}(X)&\H^2(X,\Omega^{[1,2)}_X)\arrow{l}\arrow{r}&\H^1(X,\Omega^1_X).
        \end{tikzcd}
    \]


    \begin{lemma}\label{lem:delta}
        The map
        \[
            \delta:k^{\oplus r}\to\H^2(X,\Omega^{[1,2)}_X)    
        \]
        obtained by applying $\H^0$ to $\delta$ is equal to the map $c_1\otimes k$ defined by
        \[
            (\lambda_1,\ldots,\lambda_r)\mapsto\sum_{i=1}^r\lambda_ic_1^{}(\ms O(D_i)).
        \]
    \end{lemma}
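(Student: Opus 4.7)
The plan is to compute both sides as explicit \v{C}ech cocycles in $\H^2(X,\Omega^{[1,2)}_X)$ on a common open cover and match them up. I would fix a cover $\{U_\alpha\}$ of $X$ on which each irreducible component $D_i$ admits a local equation $f_{i,\alpha}\in\ms O_X(U_\alpha)$, and set $g_{i,\alpha\beta}:=f_{i,\beta}/f_{i,\alpha}\in\ms O_X(U_{\alpha\beta})^\times$, so that the $g_{i,\alpha\beta}$ are transition functions for $\ms O(D_i)$.

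First I would unwind the Chern classes. Because $\mathrm{dlog}$ factors through closed forms, it defines a morphism of complexes $\bG_m[-1]\to\Omega^{[1,2)}_X$. Since $[\ms O(D_i)]\in\H^1(X,\bG_m)$ is represented by the \v{C}ech $1$-cocycle $\{g_{i,\alpha\beta}\}$, its image $c_1(\ms O(D_i))\in\H^2(X,\Omega^{[1,2)}_X)$ is represented in the \v{C}ech--hypercohomology double complex by the total $2$-cocycle
\[
(\{\mathrm{dlog}\,g_{i,\alpha\beta}\}_{\alpha,\beta},\,0) \in \check{C}^1(\Omega^1_X)\oplus\check{C}^0(\Z\Omega^2_X).
\]
One checks directly that this is a cocycle: the \v{C}ech coboundary of the first component vanishes by the cocycle identity for $\{g_{i,\alpha\beta}\}$, and $\rd$ of each $\mathrm{dlog}\,g_{i,\alpha\beta}$ is zero.

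Next I would compute $\delta(\lambda_1,\ldots,\lambda_r)$ via the standard \v{C}ech recipe for the connecting morphism of the short exact sequence~\eqref{eq:SES of complexes}. The defining property of the residue exhibits $\tilde{s}_\alpha:=\sum_i\lambda_i\tfrac{\rd f_{i,\alpha}}{f_{i,\alpha}}\in\ms F_D(U_\alpha)$ as a local lift of the global section $(\lambda_1,\ldots,\lambda_r)$ to the degree-$0$ piece $\ms F_D$ of $\C[1]$. Applying the total differential of $\C[1]$ to $\{\tilde{s}_\alpha\}$ produces two pieces: the internal contribution $-\rd\tilde{s}_\alpha$ vanishes because each $\mathrm{dlog}\,f_{i,\alpha}$ is closed, while the \v{C}ech contribution $\tilde{s}_\beta-\tilde{s}_\alpha$ simplifies to $\sum_i\lambda_i\,\mathrm{dlog}\,g_{i,\alpha\beta}$, which lies in $\Omega^1_X(U_{\alpha\beta})$. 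Hence $\delta(\lambda_1,\ldots,\lambda_r)$ is represented by the pair $(\{\sum_i\lambda_i\,\mathrm{dlog}\,g_{i,\alpha\beta}\},\,0)$, which under the canonical identification $\H^1(\Omega^{[1,2)}_X[1])=\H^2(\Omega^{[1,2)}_X)$ is exactly $\sum_i\lambda_i c_1(\ms O(D_i))$ by the computation of $c_1$ above.

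The main difficulty is bookkeeping rather than substance: to ensure no stray sign appears between $\delta$ and $c_1$, one must carefully follow the Stacks Project sign conventions for the shifts $[1]$ and $[2]$ used in~\eqref{eq:SES of complexes} (and the resulting $-\rd$ on the differentials of $\C[1]$ and $\Omega^{[1,2)}_X[1]$) throughout both computations. Once this is done consistently, the two \v{C}ech representatives coincide on the nose.
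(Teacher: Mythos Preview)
Your proposal is correct but takes a genuinely different route from the paper. The paper gives a ``categorified'' proof: it interprets $\H^2(X,\Omega^{[1,2)}_X)$ as isomorphism classes of torsors for the two-term complex $\Omega^{[1,2)}_X[1]$, identifies $c_1(\ms O(D_i))$ with the class of the torsor of integrable connections on $\ms O(D_i)$, and reads off $\delta(1_{D_i})$ from the standard description of the boundary map as the class of the torsor of closed log $1$-forms $\eta$ with $\res(\eta)=1_{D_i}$. The isomorphism between these two torsors is furnished by the explicit formula $\nabla_\eta=\rd+\eta$, which one checks lands in $\Omega^1_X(D_i)$ and has curvature $\rd\eta$. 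Your \v{C}ech computation is more elementary and arguably the standard way to prove such a statement: both sides are visibly represented by the same hypercocycle $(\{\sum_i\lambda_i\,\mathrm{dlog}\,g_{i,\alpha\beta}\},0)$, and the only content is the sign bookkeeping you flag. What the paper's approach buys is reusability---the torsor language is immediately recycled in the remark following Proposition~\ref{prop:a key commuting square} to give a stacky proof that the square~\eqref{eq:a diagram that commutes} is simultaneously commutative and homotopy Cartesian. Your approach avoids that machinery entirely, which is fine for the lemma in isolation.
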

    \begin{proof}
        We give a categorified proof. We view $\H^2(X,\Omega^{[1,2)}_X)$ as classifying torsors for the 2-term complex $\Omega^{[1,2)}_X[1]$ and the de Rham Chern class $c_1^{}(\ms O(D_i))\in\H^2(X,\Omega^{[1,2)})$ as the class of the torsor of integrable connections on $\ms O(D_i)$ (as in \cite{MR1237825}). It will suffice to check agreement of the two maps on the basis vectors $1_{D_i}$. The standard description of the boundary map $\H^0\to\H^1$ for the sequence~\eqref{eq:SES of complexes} says that the image of $1_{D_i}$ under $\delta$ is the class of the torsor of 1-forms with log poles along $D_i$, say $\eta$, satisfying $\res(\eta)=1_{D_i}$ and $-\rd\eta=0$. Given such a form we define an integrable connection on $\ms O(D_i)$ as follows. Note first that the quotient rule implies that the usual differential (of rational functions) restricts to a map $\rd:\ms O(D_i)\to\Omega^1_X(2D_i)$. We claim that the sum
        \[
            \rd+\eta:\ms O(D_i)\to\Omega^1_X(2D_i)
        \]
        factors through $\Omega^1_X(D_i)$. We may check this locally. Because $\res(\eta)=1_{D_i}$ we may locally write $\eta=\tau+\frac{\rd f}{f}$ for some 1-form $\tau$ and some local equation $f$ for $D_i$. We may write an arbitrary section of $\ms O(D_i)$ locally as $g/f$ for a regular function $g$, and then compute
        \[
            (\rd+\eta)\left(\frac{g}{f}\right)=\frac{\rd g}{f}-\frac{g\rd f}{f^2}+\left(\tau+\frac{\rd f}{f}\right)\frac{g}{f}=\frac{\rd g}{f}+\tau\frac{g}{f}.
        \]
        This proves the claim. We write
        \[
            \nabla_{\eta}:=\rd+\eta:\ms O(D_i)\to\Omega^1_X(D_i)
        \]
        for the resulting map. It is immediate that $\nabla_{\eta}$ is a connection with curvature $\rd\eta=0$. This defines an isomorphism of torsors, and it follows that $\delta(1_{D_i})=c_1^{}(D_i)$.
    \end{proof}

    \begin{proposition}\label{prop:a key commuting square}
        If $X$ is smooth and proper over $k$, then we have a commutative diagram
        \begin{equation}\label{eq:a diagram that commutes}
            \begin{tikzcd}
                \H^0(X,\Omega^1_X(\log D))\arrow{d}[swap]{\res}\arrow{r}{-\rd}&\H^0(X,\Z\Omega^2_X)\arrow{d}\\
                k^{\oplus r}\arrow{r}{c_1^{}\otimes k}&\H^2(X,\Omega^{[1,2)}_X).
            \end{tikzcd}
        \end{equation}
        If $\H^0(X,\Z\Omega^1_X)=0$ then this diagram is Cartesian.
    \end{proposition}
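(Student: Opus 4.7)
The commutativity of the square is essentially built into diagram~\eqref{eq:diagram in derived cat}. My plan is to apply $\H^0$ to that morphism of distinguished triangles and read off the desired square from the resulting diagram after two identifications: the isomorphism $\H^0(X,\ms F_D)\simeq\H^0(X,\Omega^1_X(\log D))$ from Lemma~\ref{lem:it factors}, and the identification of $\delta$ with $c_1\otimes k$ from Lemma~\ref{lem:delta}. Granted those lemmas, commutativity is immediate.

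For the Cartesian assertion I will use the elementary criterion that a commutative square is Cartesian if and only if (i) the induced map between the kernels of its two vertical arrows is an isomorphism, and (ii) the image of the left vertical arrow equals the preimage under the bottom arrow of the image of the right vertical arrow. To verify these I will use two long exact sequences. The sequence $0\to\Omega^1_X\to\ms F_D\to\bigoplus_{i}\underline{k}_{D_i}\to 0$ yields
\[
    0\to\H^0(X,\Omega^1_X)\to\H^0(X,\Omega^1_X(\log D))\xrightarrow{\res}k^{\oplus r}\xrightarrow{c_1^{\H}\otimes k}\H^1(X,\Omega^1_X),
\]
where the boundary map is $c_1^{\H}\otimes k$ by the same torsor-theoretic argument as in Lemma~\ref{lem:delta} (represent $1_{D_i}$ by $\rd f_i/f_i$, take differences on overlaps, and recognize the result as the \v{C}ech cocycle for $c_1^{\H}(\ms O(D_i))$). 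The short exact sequence of complexes $0\to\Z\Omega^2_X[-2]\to\Omega^{[1,2)}_X\to\Omega^1_X[-1]\to 0$ yields
\[
    \H^0(X,\Omega^1_X)\xrightarrow{\rd}\H^0(X,\Z\Omega^2_X)\to\H^2(X,\Omega^{[1,2)}_X)\to\H^1(X,\Omega^1_X),
\]
and the composition $k^{\oplus r}\to\H^2(X,\Omega^{[1,2)}_X)\to\H^1(X,\Omega^1_X)$ equals $c_1^{\H}\otimes k$ by the last compatibility diagram of the excerpt.

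Condition~(i) thereby reduces to the injectivity of $\rd\colon\H^0(X,\Omega^1_X)\to\H^0(X,\Z\Omega^2_X)$, which is precisely $\H^0(X,\Z\Omega^1_X)=0$. Condition~(ii) becomes the tautology that both sides equal $\ker(c_1^{\H}\otimes k)$, so holds unconditionally. I do not foresee a substantive obstacle: the argument is careful bookkeeping with the two long exact sequences, and the essential content is already packaged in Lemmas~\ref{lem:it factors} and~\ref{lem:delta}.
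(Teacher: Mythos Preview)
Your argument is correct. The commutativity step is identical to the paper's: apply cohomology to diagram~\eqref{eq:diagram in derived cat} and invoke Lemmas~\ref{lem:it factors} and~\ref{lem:delta}. For the Cartesian assertion you take a slightly different route. The paper writes out the long exact sequences of \emph{both} rows of~\eqref{eq:diagram in derived cat} as a single ladder
\[
\begin{tikzcd}
0\arrow{r}\arrow{d}&\H^1(X,\C)\arrow[equals]{d}\arrow{r}&\H^0(X,\Omega^1_X(\log D))\arrow{d}{\res}\arrow{r}{-\rd}&\H^0(X,\Z\Omega^2_X)\arrow{d}\arrow{r}&\H^2(X,\C)\arrow[equals]{d}\\
\H^0(X,\Z\Omega^1_X)\arrow{r}&\H^1(X,\C)\arrow{r}&k^{\oplus r}\arrow{r}{c_1\otimes k}&\H^2(X,\Omega^{[1,2)}_X)\arrow{r}&\H^2(X,\C)
\end{tikzcd}
\]
and then chases this one diagram. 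You instead verify the kernel/image criterion using two \emph{separate} exact sequences: the one for $0\to\Omega^1_X\to\ms F_D\to\bigoplus_i\underline{k}_{D_i}\to 0$ to control the left column, and the stupid filtration $0\to\Z\Omega^2_X[-2]\to\Omega^{[1,2)}_X\to\Omega^1_X[-1]\to 0$ to control the right column. This costs you one extra ingredient the paper avoids, namely the identification of the boundary map $k^{\oplus r}\to\H^1(X,\Omega^1_X)$ with $c_1^{\H}\otimes k$; on the other hand, your organization makes it completely transparent \emph{why} the hypothesis $\H^0(X,\Z\Omega^1_X)=0$ is exactly what is needed (it is literally the kernel of the induced map on kernels), whereas in the paper's chase this emerges from the position of that group in the ladder. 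Both arguments are short and use the same underlying inputs.
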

    \begin{proof}
    If you take cohomology of the diagram~\eqref{eq:diagram in derived cat}, use the identification $\H^0(X,\ms F_D)=\H^0(X,\Omega^1_X(\log D))$, apply Lemma \ref{lem:delta}, and note that $\H^1(X,\Omega^{[1,2)}_X)=\H^0(X,\Z\Omega^1_X)$, you'll get this:
        \[
            \begin{tikzcd}
                0\arrow{r}\arrow{d}&\H^1(X,\C)\arrow[equals]{d}\arrow{r}&\H^0(X,\Omega^1_X(\log D))\arrow{d}[swap]{\res}\arrow{r}{-\rd}&\H^0(X,\Z\Omega^2_X)\arrow{d}\arrow{r}&\H^2(X,\C)\arrow[equals]{d}\\
                \H^0(X,\Z\Omega^1_X)\arrow{r}&\H^1(X,\C)\arrow{r}&k^{\oplus r}\arrow{r}{c_1^{}\otimes k}&\H^2(X,\Omega^{[1,2)}_X)\arrow{r}&\H^2(X,\C).
            \end{tikzcd}
        \]
       If $\H^0(X,\Z\Omega^1_X)=0$ then chasing this diagram yields the claimed Cartesian property.
    \end{proof}

    \begin{remark}
        To complement the above, here is a categorified proof that shows simultaneously the commutativity and Cartesian property of the square~\eqref{eq:a diagram that commutes}. Let $\sTors_X$ denote the stack of torsors for the 2-term complex $\Omega^{[1,2)}_X[1]$. Consider the map
        \[
            \mathscr{c}_1:\left\{1_{D_1},\ldots,1_{D_r}\right\}\to\sTors_X
        \]
        that sends $1_{D_i}$ to the torsor of integrable connections on $\ms O(D_i)$. Using the vector stack structure on $\sTors_X$ this extends uniquely to a map
        \[
            \mathscr{c}_1\otimes k:\bigoplus_{i=1}^r\underline{k}_{D_i}\to\sTors_X.
        \]
        Explicitly, this map sends a tuple $(\lambda_1,\ldots,\lambda_r)$ to the torsor of splittings of the Picard algebroid obtained by taking the Baer sum of the pushouts along $\lambda_i$ of the Atiyah algebroids of the $\ms O(D_i)$. The square~\eqref{eq:a diagram that commutes} is obtained by taking isomorphism classes of global objects in the diagram of stacks
        \begin{equation}\label{eq:a diagram that commutes again}
            \begin{tikzcd}
                \ms F_D\arrow{r}{-\rd}\arrow{d}[swap]{\res}&\Z\Omega^2_X\arrow{d}{\gamma}\\
                \bigoplus_{i}\underline{k}_{D_i}\arrow{r}{\mathscr{c}_1\otimes k}&\sTors_X,
            \end{tikzcd}
        \end{equation}
        where $\gamma$ sends a closed 2-form $\omega$ to the torsor of connections on $\ms O_X$ with curvature $\omega$. We will show that the square~\eqref{eq:a diagram that commutes again} is 2-commutative and homotopy Cartesian (with no vanishing assumptions). This will show that~\eqref{eq:a diagram that commutes} commutes. Furthermore, the set of isomorphisms between two torsors for $\Omega^{[1,2)}_X[1]$ is either empty or is itself a torsor under $\tau_{\leq 0}(\Omega^{[1,2)}_X[1])=\Z\Omega^1_X$. Thus if $\H^0$ of this group vanishes then the homotopy fiber product of global objects of the bottom right corner agrees with the ordinary fiber product of the sets of isomorphism classes of global objects, and so we will conclude that~\eqref{eq:a diagram that commutes} is Cartesian.

        We first construct a natural isomorphism between the down-right and right-down functors. We describe the action of this transformation directly in the case when $\eta\in\ms F_D$ is a 1-form with log poles such that $\res(\eta)=1_{D_i}$ for some $i$, and obtain the general case from this by linearity. Giving an isomorphism of torsors $\gamma(-\rd\eta)\simeq\mathscr{c}_1(1_{D_i})$ is the same as giving a connection on $\ms O(D_i)$ with curvature $\rd\eta$. As in the proof of Lemma \ref{lem:delta}, $\nabla_{\eta}=\rd+\eta$ gives such a connection, and we use it to define the desired natural isomorphism. It remains to show that the square~\eqref{eq:a diagram that commutes again} is homotopy Cartesian. Again by linearity it will suffice to consider the fiber over $1_{D_i}$. The fiber of $\ms F_D$ over $1_{D_i}$ is $\ms F_{D_i}$. The homotopy fiber $\gamma^{-1}(\mathscr{c}_1(1_{D_i}))$ of $\gamma$ over $\mathscr{c}_1(1_{D_i})$ may be identified with the set of pairs $(\omega,\nabla)$, where $\omega$ is a closed 2-form and $\nabla$ is a connection on $\ms O(D_i)$ with curvature $-\omega$. We will show that the map
        \begin{equation}\label{eq:map of pairs}
            \ms F_{D_i}\to\gamma^{-1}(\mathscr{c}_1(1_{D_i})),\quad\eta\mapsto(-\rd\eta,\nabla_{\eta})
        \end{equation}
        is an isomorphism. Indeed, the left hand side carries a simply transitive action of $\Omega^1_X$ by addition. The right hand side also carries an action of $\Omega^1_X$, where a 1-form $\tau$ acts by $(\omega,\nabla)\mapsto (\omega-\rd\tau,\nabla+\tau)$. This action is also simply transitive, because it projects to a simply transitive action on the torsor of connections on $\ms O(D_i)$. The map~\eqref{eq:map of pairs} is compatible with these actions, and hence is an isomorphism.\qed
    \end{remark}

\begin{proposition}\label{prop:degen criterion for a surface}
	Let $X$ be a smooth projective surface with $\H^1(X,\ms O_X)=0$ and degenerate Hodge--de Rham spectral sequence. If $D\subset X$ is an snc divisor, then every differential on every page of the log Hodge--de Rham spectral sequence for $(X,D)$ vanishes, except possibly the $\mathrm{E}_1$ differential
	\begin{equation}\label{eq:spicy differential}
	       \rd^{1,0}:\H^0(X,\Omega^1_X(\log D))\to\H^0(X,\Omega^2_X(\log D)).
	\end{equation}
\end{proposition}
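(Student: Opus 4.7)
The plan is to enumerate the possibly nonzero differentials of the log HdR spectral sequence on a surface and check each, save $d_1^{1,0}$, vanishes. Since $\dim X = 2$, the $E_1$-page is supported in $a, b \in \{0, 1, 2\}$, so $d_1^{2,b}$ vanishes trivially. The row $a = 0$ is handled by the hypotheses: $d_1^{0,0}$ kills constants, $d_1^{0,1}$ has zero source by $\H^1(X, \ms O_X) = 0$, and $d_1^{0,2}$ factors through the analogous differential for the (degenerate) HdR spectral sequence of $X$. For later pages only $d_2^{0,1}$ and $d_2^{0,2}$ can be nonzero; $d_2^{0,1}$ has zero source, and $d_2^{0,2}$ vanishes by naturality from the HdR spectral sequence of $X$, since the comparison map is an isomorphism on $E_2^{0,2} = \H^2(X, \ms O_X)$.

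The remaining content is the vanishing of $d_1^{1,1}$ and $d_1^{1,2}$. For both I plan to use Serre duality for log forms (recorded above in this section) to identify these with the differentials $d_1^{0,1}$ and $d_1^{0,0}$ respectively of the log HdR spectral sequence of the twisted complex $\Omega^{\bullet}_X(\log D)(-D)$. The case $d_1^{1,2}$ is immediate: the source $\H^0(X, \ms O_X(-D))$ of $d_1^{0,0}$ of the twisted spectral sequence vanishes for $D$ a nonempty effective divisor on a connected projective variety (the case $D = \emptyset$ is the hypothesis).

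The main obstacle is the vanishing of the map $\rd \colon \H^1(X, \ms O_X(-D)) \to \H^1(X, \Omega^1_X(\log D)(-D))$ Serre-dual to $d_1^{1,1}$. The plan is to exploit the commutative diagram of short exact sequences of sheaves
\[
\begin{tikzcd}
0 \arrow{r} & \ms O_X(-D) \arrow{r}\arrow{d}{\rd} & \ms O_X \arrow{r}\arrow{d}{\rd} & \ms O_D \arrow{r}\arrow{d}{\overline{\rd}} & 0 \\
0 \arrow{r} & \Omega^1_X(\log D)(-D) \arrow{r} & \Omega^1_X \arrow{r} & \bigoplus_i \Omega^1_{D_i} \arrow{r} & 0,
\end{tikzcd}
\]
where the identification $\Omega^1_X / \Omega^1_X(\log D)(-D) \cong \bigoplus_i \Omega^1_{D_i}$ arises from a local computation in SNC coordinates, and $\overline{\rd}$ is the induced map, componentwise the usual exterior derivative on $D_i$. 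By naturality of the connecting maps, the composition of the target differential with the connecting map $\H^0(\ms O_D) \to \H^1(\ms O_X(-D))$ equals $\H^0(\overline{\rd})$ followed by the connecting map for the bottom row. But global sections of $\ms O_D$ restrict on each smooth projective connected component $D_i$ to constants, so $\H^0(\overline{\rd}) = 0$; and the first connecting map is surjective by the hypothesis $\H^1(X, \ms O_X) = 0$, forcing the target differential to vanish.
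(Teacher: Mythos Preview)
Your proof is correct and follows essentially the same strategy as the paper's: handle the $a=0$ column via the comparison with the ordinary Hodge--de Rham spectral sequence, dispose of $d_1^{1,2}$ by Serre duality (the paper dualizes the target, you dualize the differential to the twisted complex---equivalent), and treat $d_1^{1,1}$ via the Serre-dual differential $\rd:\H^1(\ms O_X(-D))\to\H^1(\Omega^1_X(\log D)(-D))$ using the same commutative square and the surjectivity of the boundary map from $\H^1(X,\ms O_X)=0$.

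Two small differences worth noting. First, the paper is more cautious about the bottom-right corner: it introduces the cokernel $\Q$ of $\Omega^1_X(\log D)(-D)\hookrightarrow\Omega^1_X$ and only uses that $\Q$ \emph{embeds} into $\bigoplus_i\Omega^1_{D_i}$ (citing Esnault--Viehweg). Your direct claim that $\Q\cong\bigoplus_i\Omega^1_{D_i}$ is in fact correct for surfaces with snc divisors (the local check at a node goes through), but in higher dimensions this identification fails, which is presumably why the paper states only the embedding. Second, for the vanishing of $\H^0(\overline{\rd})$ the paper invokes degeneration of the Hodge--de Rham spectral sequence for each $D_i$, whereas you use the more elementary fact that sections of $\ms O_D$ restrict to constants on each connected projective component; your argument is cleaner here.
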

\begin{proof}
	 For $b=0,1,2$ the differential
	\[
		\rd^{1,b}:\H^b(X,\ms O_X)\to\H^b(X,\Omega^1_X(\log D))
	\]
	on the $\mathrm{E}_1$-page of the log HdR spectral sequence factors through the differential $\H^b(X,\ms O_X)\to\H^b(X,\Omega^1_X)$ on the $\mathrm{E}_1$-page of the ordinary HdR spectral sequence. By assumption the latter vanishes, so the former does as well. The same reasoning shows the vanishing of every differential on any page of the log HdR spectral sequence whose source is $\H^b(X,\ms O_X)$ for some $b$. In particular, all differentials on the $\mathrm{E}_s$-page vanish for $s\geq 2$. On the $\mathrm{E}_1$-page the remaining differentials to consider are
	\[
		\rd^{1,2}:\H^2(X,\Omega^1_X(\log D))\to\H^2(X,\Omega^2_X(\log D))
	\]
	and
	\[
		\rd^{1,1}:\H^1(X,\Omega^1_X(\log D))\to\H^1(X,\Omega^2_X(\log D)).
	\]
	By Serre duality we have $\H^2(X,\Omega^2_X(\log D))\simeq\H^0(X,\ms O(-D))^{\vee}$ and as $D$ is effective this vanishes, so $\rd^{1,2}$ is trivial. To deal with $\rd^{1,1}$, let $\Q$ denote the cokernel of the inclusion $\Omega^1_X(\log D)(-D)\subset\Omega^1_X$. Taking cohomology of the diagram
	\[
		\begin{tikzcd}
			0\arrow{r}&\ms O(-D)\arrow{d}{\rd}\arrow{r}&\ms O_X\arrow{d}{\rd}\arrow{r}&\ms O_D\arrow{d}{\overline{\rd}}\arrow{r}&0\\
			0\arrow{r}&\Omega^1_X(\log D)(-D)\arrow{r}&\Omega^1_X\arrow{r}&\Q\arrow{r}&0
		\end{tikzcd}
	\]
	and using $\H^1(X,\ms O_X)=0$ we get
	\[
		\begin{tikzcd}
			\H^0(D,\ms O_D)\arrow{d}{\rd^{\prime\prime}}\arrow{r}&\H^1(X,\ms O(-D))\arrow{d}{\rd'}\arrow{r}&0\arrow{d}\\
			\H^0(X,\Q)\arrow{r}&\H^1(X,\Omega^1_X(\log D)(-D))\arrow{r}&\H^1(X,\Omega^1_X).
		\end{tikzcd}
	\]
	Let $D_1,\ldots,D_r$ be the irreducible components of $D$. Using \cite[2.3]{MR1193913} we have that the restriction maps give an embedding of $\Q$ as a subsheaf of $\bigoplus_{i}\Omega^1_{D_i}$. This embedding and the map $\overline{\rd}$ fit into the diagram
	\[
		\begin{tikzcd}
			\ms O_D\arrow[hook]{r}\arrow{d}{\overline{\rd}}&\bigoplus_{i}\ms O_{D_i}\arrow{d}{\bigoplus_i\rd}\\
			\Q\arrow[hook]{r}&\bigoplus_{i}\Omega^1_{D_i}.
		\end{tikzcd}
	\]
	By the degeneration of the ordinary HdR spectral sequences for the $D_i$ each of the differentials $\rd:\ms O_{D_i}\to\Omega^1_{D_i}$ is trivial on $\H^0$. It follows that $\rd^{\prime\prime}=0$, hence $\rd'=0$. But $\rd'$ is Serre dual to $\rd^{1,1}$, so $\rd^{1,1}=0$.
\end{proof}

\begin{figure}[H]
	\centering
	\begin{equation*}
	\begin{tikzcd}
	\H^2(X,\ms O_X)\arrow{r}\arrow[dashed]{drr}&\H^2(X,\Omega^1_X(\log D))\arrow{r}&\H^2(X,\Omega^2_X(\log D))    \\
	\H^1(X,\ms O_X)\arrow{r} \arrow[dashed]{drr}&\H^1(X,\Omega^1_X(\log D))\arrow{r}&\H^1(X,\Omega^2_X(\log D)) \\
	\H^0(X,\ms O_X) \arrow{r} &\H^0(X,\Omega^1_X(\log D))\arrow{r}&\H^0(X,\Omega^2_X(\log D))
	\end{tikzcd}
	\end{equation*}
	\caption{The $\mathrm{E}_1$-page of the log Hodge--de Rham spectral sequence for a surface. The dashed arrows indicate the positions of the differentials on the $\mathrm{E}_2$-page.}
	\label{fig:surfaceslope}
\end{figure}

For a smooth proper variety $X$ and an snc divisor $D$ the differentials in the log Hodge--de Rham spectral sequence for $(X,D)$ are Serre dual to those in the spectral sequence~\eqref{eq:log HdR ss dual}. In particular, for a surface the differential~\eqref{eq:spicy differential} in the log Hodge--de Rham spectral sequence is Serre dual to the differential
\begin{equation}\label{eq:interesting differential, in variant ss}
\rd:\H^2(X,\ms O(-D))\to\H^2(X,\Omega^1_X(\log D)(-D)).
\end{equation}
Thus Theorem \ref{thm:degeneration criterion} implies the following.

\begin{corollary}
	If $X$ is a smooth projective surface with $\H^1(X,\ms O_X)=0$ and degenerate Hodge--de Rham spectral sequence, then every differential on every page of the spectral sequence~\eqref{eq:log HdR ss dual} vanishes, except possibly~\eqref{eq:interesting differential, in variant ss}.
\end{corollary}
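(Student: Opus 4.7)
The plan is to derive the Corollary as an immediate consequence of Proposition \ref{prop:degen criterion for a surface} together with the page-wise Serre duality between the spectral sequences~\eqref{eq:log HdR ss} and~\eqref{eq:log HdR ss dual} that the paragraph preceding the statement has just recorded. Concretely, the perfect pairing of $\ms O_X$-modules $\Omega^a_X(\log D)\otimes\Omega^{d-a}_X(\log D)(-D)\to\Omega^d_X$ recalled earlier in the section extends to a pairing of complexes $\Omega^{\bullet}_X(\log D)\otimes\Omega^{\bullet}_X(\log D)(-D)\to\Omega^{\bullet}_X[d]$ compatible with the naive (Hodge) filtrations on both sides. Passing to cohomology, this induces a perfect duality between the $\mathrm{E}_r$-page of~\eqref{eq:log HdR ss dual} and the $k$-linear dual of the $\mathrm{E}_r$-page of~\eqref{eq:log HdR ss}, with the reflection of bidegrees $(a,b)\leftrightarrow(d-a,d-b)$, under which the respective differentials are $k$-linear transposes of each other.

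With this input the argument is pure bookkeeping, for $d=2$. First, Proposition \ref{prop:degen criterion for a surface} shows that every differential on every page of~\eqref{eq:log HdR ss} vanishes except possibly the $\mathrm{E}_1$-differential~\eqref{eq:spicy differential}. Second, each differential on a page of~\eqref{eq:log HdR ss dual} is the $k$-linear transpose of a uniquely determined differential on the corresponding page of~\eqref{eq:log HdR ss}, and therefore vanishes if and only if that differential does. Third, under the reflection $(a,b)\mapsto(2-a,2-b)$, the positions $(1,0)$ and $(2,0)$ of the source and target of~\eqref{eq:spicy differential} map to $(1,2)$ and $(0,2)$ respectively, and the $\mathrm{E}_1$-differential Serre dual to~\eqref{eq:spicy differential} is precisely $\H^2(X,\ms O(-D))\to\H^2(X,\Omega^1_X(\log D)(-D))$, i.e.~\eqref{eq:interesting differential, in variant ss}. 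The desired conclusion follows.

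There is no serious obstacle here, since the Corollary is designed as an immediate repackaging of the Proposition via Serre duality, and the paragraph preceding the statement has already installed the required compatibility. The only point requiring care is checking that the wedge-product pairing respects the Hodge filtration and therefore induces a pairing of spectral sequences in which $\mathrm{E}_r$-differentials are transposes of $\mathrm{E}_r$-differentials, but this is a standard feature of Serre duality for filtered de Rham complexes.
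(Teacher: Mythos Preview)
Your proposal is correct and follows the paper's own approach: the Corollary is stated immediately after the sentence recording that the differentials in~\eqref{eq:log HdR ss} are Serre dual to those in~\eqref{eq:log HdR ss dual}, and is deduced from Proposition~\ref{prop:degen criterion for a surface} by exactly the page-wise duality argument you give (the paper's reference to ``Theorem~\ref{thm:degeneration criterion}'' here appears to be a slip for Proposition~\ref{prop:degen criterion for a surface}).
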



\begin{remark}
	If $X$ is a surface with degenerate HdR spectral sequence and $\H^1(X,\ms O_X)\neq 0$, it seems possible that the differential
	\[
		\rd^{1,1}:\H^1(X,\Omega^1_X(\log D))\to\H^1(X,\Omega^2_X(\log D))
	\]
	could also be nonzero, but I do not have an example.
\end{remark}

\section{Logarithmic differential forms on K3 surfaces}\label{sec:logarithmic differential forms on K3 surfaces}

Let $X$ be a K3 surface over an algebraically closed field $k$. We write $\H^n_{\dR}(X):=\H^n(X,\Omega^{\bullet}_{X/k})$ for the algebraic de Rham cohomology groups of $X$ over $k$. We consider the Hodge filtration
\[
    0=\F^3\subset\F^2\subset\F^1\subset\F^0=\H^2_{\dR}(X)
\]
on $\H^2_{\dR}$, where
\[
    \F^i:=\mathrm{im}\left(\H^2(X,\Omega^{\geq i}_X)\to\H^2_{\dR}(X)\right).
\]
Due to the degeneration of the Hodge--de Rham spectral sequence we have natural identifications
\[
    \F^0/\F^1=\H^2(X,\ms O_X),\quad\F^1/\F^2=\H^1(X,\Omega^1_X),\quad\F^2/\F^3=\F^2=\H^0(X,\Omega^2_X),
\]
\[
    \text{and}\quad\F^1=\H^2(X,\Omega^{[1,2)}_X).
\]
The vector spaces $\H^2(X,\ms O_X)$, $\H^1(X,\Omega^1_X)$, and $\H^0(X,\Omega^2_X)$ have dimensions $1,20,1$.

\begin{theorem}\label{thm:degeneration criterion}
	Let $X$ be a K3 surface and let $D\subset X$ be an snc divisor with irreducible       components $D_1,\dots,D_r$. The following are equivalent.
	\begin{enumerate}
		\item\label{item:1} The logarithmic Hodge--de Rham spectral sequence for $(X,D)$ is nondegenerate.
		\item\label{item:2} The differential
		\[
		  \rd:\H^0(X,\Omega^1_X(\log D))\to\H^0(X,\Omega^2_X(\log D))
		\]
		on the $\mathrm{E}_1$-page is nonzero.
		\item\label{item:3} The line $\F^2\subset\H^2_{\dR}(X)$ is contained in the $k$-span of the de Rham Chern classes $c_1^{}(\ms O(D_1)),\dots,c_1^{}(\ms O(D_r))\in\H^2_{\dR}(X)$.
	\end{enumerate}
\end{theorem}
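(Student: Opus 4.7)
The plan is to reduce both equivalences to Propositions \ref{prop:degen criterion for a surface} and \ref{prop:a key commuting square}, which do all the conceptual work.

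For \eqref{item:1} $\Leftrightarrow$ \eqref{item:2}, I would simply invoke Proposition \ref{prop:degen criterion for a surface}. A K3 surface satisfies $\H^1(X,\ms O_X)=0$ and has degenerate Hodge--de Rham spectral sequence, so the hypotheses of that proposition hold, and it says that the only differential anywhere in the log HdR spectral sequence for $(X,D)$ that can be nonzero is exactly the one displayed in \eqref{item:2}.

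For \eqref{item:2} $\Leftrightarrow$ \eqref{item:3}, I plan to apply Proposition \ref{prop:a key commuting square} to $X$. Since $\H^0(X,\Omega^1_X)=0$ on a K3, \emph{a fortiori} $\H^0(X,\Z\Omega^1_X)=0$, so the square in that proposition is Cartesian. By Lemma \ref{lem:it factors} the differential in \eqref{item:2} factors through the inclusion $\H^0(X,\Omega^2_X)\hookrightarrow\H^0(X,\Omega^2_X(\log D))$, and since $\H^0(X,\Omega^2_X)=\F^2$ is one-dimensional, \eqref{item:2} is equivalent to the induced map $\H^0(X,\Omega^1_X(\log D))\to\F^2$ being surjective.

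To match this up with \eqref{item:3}, I would identify the pieces of the Cartesian square. Because $\Z\Omega^2_X=\Omega^2_X$ on a surface, the right vertical map becomes the natural inclusion $\F^2=\H^0(X,\Omega^2_X)\hookrightarrow\H^2(X,\Omega^{[1,2)}_X)=\F^1$ coming from the Hodge filtration, while by Lemma \ref{lem:delta} the image of the bottom horizontal map is the $k$-span $\P_D:=\langle c_1(\ms O(D_1)),\ldots,c_1(\ms O(D_r))\rangle\subset\F^1$. The Cartesian property then identifies the image of $-\rd\colon\H^0(X,\Omega^1_X(\log D))\to\F^2$ with the intersection $\F^2\cap\P_D$ taken inside $\F^1$. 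Since $\dim_k\F^2=1$, this intersection is either $0$ or all of $\F^2$, so the differential is nonzero precisely when $\F^2\subset\P_D$, which is \eqref{item:3}.

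I do not anticipate any real obstacle: the conceptual content is already packaged in Propositions \ref{prop:degen criterion for a surface} and \ref{prop:a key commuting square}, and the remaining step is an assembly that leverages the one-dimensionality of $\F^2$ and the vanishing $\H^0(X,\Omega^1_X)=0$ on a K3 surface to pin down exactly when the relevant differential is forced to vanish.
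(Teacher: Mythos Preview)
Your proposal is correct and follows essentially the same route as the paper: both reduce \eqref{item:1}$\iff$\eqref{item:2} to Proposition~\ref{prop:degen criterion for a surface} and \eqref{item:2}$\iff$\eqref{item:3} to the Cartesian square of Proposition~\ref{prop:a key commuting square}, using $\H^0(X,\Omega^1_X)=0$ and $\Z\Omega^2_X=\Omega^2_X$. The only cosmetic difference is that you package the Cartesian argument as ``the image of $-\rd$ equals $\F^2\cap\P_D$,'' whereas the paper spells out the two directions separately (commutativity for one, explicit lifting for the other); these are the same content.
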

\vspace{.1cm}
\begin{proof}
	~\eqref{item:1}$\iff$~\eqref{item:2} follows from Proposition \ref{prop:degen criterion for a surface}. To show~\eqref{item:2}$\iff$~\eqref{item:3} we apply Proposition \ref{prop:a key commuting square}. As $\H^0(X,\Omega^1_X)=0$ we have $\H^0(X,\Z\Omega^1_X)=0$, so the square~\eqref{eq:a diagram that commutes} is Cartesian. The map $\H^2(X,\Omega^{[1,2)}_X)\to\H^2_{\dR}(X)$ is injective, so there is no harm in replacing the lower right entry with $\H^2_{\dR}(X)$, and as $X$ is a surface we have $\Z\Omega^2_X=\Omega^2_X$. We obtain a Cartesian diagram
	\[
	\begin{tikzcd}
	\H^0(X,\Omega^1_X(\log D))\arrow{r}{-\rd}\arrow[hook]{d}[swap]{\res}&\H^0(X,\Omega^2_X)\arrow[hook]{d}\\
	k^{\oplus r}\arrow{r}{c_1^{}\otimes k}&\H^2_{\dR}(X),
	\end{tikzcd}
	\]
	where the injectivity of the vertical arrows follows from the vanishing of $\H^0(X,\Omega^1_X)$. The commutativity of the square shows~\eqref{item:2}$\implies$~\eqref{item:3}. Conversely, if $\omega\in\H^0(X,\Omega^2_X)$ is a generator and $\omega=\sum_{i}\lambda_ic_1^{}(\ms O(D_i))$ for some $\lambda_1,\ldots,\lambda_r\in k$, then by the Cartesian property there exists a meromorphic 1-form $\eta$ with log poles along $D$ such that $\res(\eta)=(\lambda_1,\ldots,\lambda_r)$ and $-\rd\eta=\omega$. This shows~\eqref{item:3}$\implies$~\eqref{item:2} and completes the proof.
\end{proof}

\begin{pg}
Let $\mathrm{P}\subset\H^2_{\dR}(X)$ denote the $k$-vector space generated by the image of the de Rham Chern character $c_1:\Pic(X)\to\H^2_{\dR}(X)$. Equivalently, $\mathrm{P}$ is the image of the induced map of $k$-vector spaces
\[
	c_1^{}\otimes k:\Pic(X)\otimes_{\bZ} k\to\H^2_{\dR}(X).
\]
As $c_1$ factors through $\F^1$, we have $\P\subset\F^1$. Consider classes $L_1,\ldots,L_r\in\Pic(X)$, let $\A\subset\Pic(X)\otimes k$ be the $k$-subspace they generate, and let $\B\subset\H^2_{\dR}(X)$ be the $k$-vector space generated by $c_1(L_1),\ldots,c_1(L_r)$ (that is, the image of $\A$ under $c_1\otimes k$). The situation looks like this:
\begin{equation}\label{eq:the situation}
    \begin{tikzcd}
        &&\F^2\arrow[d,phantom,sloped,"\subset"]\\
        \B\arrow[r,phantom,"\subset"]&\P\arrow[r,phantom,sloped,"\subset"]&\F^1\arrow[d,phantom,sloped,"\subset"]\\
        &&\H^2_{\dR}.
    \end{tikzcd}
\end{equation}
Motivated by \ref{thm:degeneration criterion} we will consider the question of when $\F^2$ is contained in $\B$. We first observe that this cannot happen outside of the supersingular case.

\begin{proposition}\label{prop:easy cases}
    If $k$ has characteristic 0, or if $k$ has positive characteristic and $X$ has finite height, then $\P\cap\F^2=0$.
\end{proposition}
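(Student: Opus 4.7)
The plan is to split into the two cases and, in each, produce an auxiliary direct sum decomposition of $\H^2_{\dR}(X)$ that places the cycle class subspace $\P$ and the line $\F^2$ in complementary summands.

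For the characteristic $0$ case, I would invoke the Lefschetz principle: $X$ together with finitely many line bundles whose classes span $\P$ is defined over a subring of $k$ finitely generated over $\bZ$, whose fraction field embeds into $\bC$. Since $\H^2_{\dR}$, its Hodge filtration, and the de Rham Chern classes all commute with flat base change, we may assume $k=\bC$. There the classical Hodge decomposition $\H^2_{\dR}(X)=\H^{0,2}\oplus\H^{1,1}\oplus\H^{2,0}$ identifies $\F^2$ with $\H^{2,0}$, while the standard fact that $c_1(L)\in\F^1\cap\overline{\F^1}=\H^{1,1}$ puts $\P$ inside $\H^{1,1}$; hence $\P\cap\F^2=0$.

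For the finite height case, set $W=W(k)$ and let $H=\H^2_{\mathrm{crys}}(X/W)$ with its $F$-crystal structure, whose reduction modulo $p$ recovers $\H^2_{\dR}(X)$ together with its Hodge filtration and de Rham Chern classes. When $X$ has height $1\le h\le 10$, the Newton polygon of $H$ has slopes $1-1/h$, $1$, $1+1/h$ with multiplicities $h$, $22-2h$, $h$, and one checks directly that this polygon touches the Hodge polygon of slopes $0,1,2$ with multiplicities $1,20,1$ precisely at the two Hodge break points $x=h$ and $x=22-h$. Katz's Newton--Hodge decomposition theorem then forces a direct sum splitting $H=U_1\oplus U_2\oplus U_3$ of $F$-crystals over $W$ compatible with the Hodge filtration, where $U_i$ is the Newton slope piece of slope $1-1/h$, $1$, $1+1/h$ respectively. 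The Hodge slope-$2$ part of $H$ lies entirely in $U_3$ for purely numerical reasons, so $\F^2 H\subset U_3$; on the other hand, each crystalline Chern class $c_1^{\mathrm{crys}}(L)$ satisfies $F\, c_1^{\mathrm{crys}}(L)=p\, c_1^{\mathrm{crys}}(L)$ (an immediate consequence of the $\mathrm{dlog}$ definition), hence has Newton slope exactly $1$ and lives in $U_2$. Reducing modulo $p$ gives $\P\subset\overline{U}_2$ and $\F^2\subset\overline{U}_3$ in the resulting decomposition $\H^2_{\dR}(X)=\overline{U}_1\oplus\overline{U}_2\oplus\overline{U}_3$, which forces $\P\cap\F^2=0$.

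The main point to verify is the compatibility of Katz's decomposition with both the Hodge filtration and the cycle class construction. I would need Katz's theorem in its strong form, giving a splitting that respects the Hodge filtration (so that the Hodge slopes of each $U_i$ are pinned down by the horizontal range it covers), together with the standard fact that crystalline cycle classes are purely of Newton slope $1$. Neither ingredient is genuinely difficult; the only care required is to confirm that the $W$-decomposition reduces to a direct sum on $\H^2_{\dR}(X)$, which is automatic since each $U_i$ is a direct summand of the free $W$-module $H$.
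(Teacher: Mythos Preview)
Your proof is correct and follows essentially the same strategy as the paper: Lefschetz principle plus Hodge decomposition in characteristic zero, and the Newton--Hodge decomposition of crystalline cohomology (the paper cites Mazur's theorem via Berthelot--Ogus for the compatibility with the Hodge filtration, where you invoke Katz's strong form) in the finite height case. One small slip: the points $x=h$ and $x=22-h$ are the \emph{Newton} break points lying on the Hodge polygon, not the Hodge break points (which sit at $x=1$ and $x=21$); but this is precisely the hypothesis Katz's theorem requires, so the argument goes through unchanged.
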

\begin{proof}
    Over the complex numbers Hodge theory gives a direct sum decomposition of complex vector spaces 
    \[
        \H^2_{\dR}(X)=\H^{0,2}\oplus\H^{1,1}\oplus\H^{2,0}.
    \]
    The de Rham Chern character factors through $\H^{1,1}$, so $\P\subset\H^{1,1}$. As $\F^2=\H^{2,0}$ we have $\P\cap\F^2=0$. When $k$ has characteristic $0$ the same consequence holds by the Lefschetz principle. If $k$ has positive characteristic and $X$ has finite height then we can make a similar argument using the Newton--Hodge decomposition
    \[
        \H^2(X/\W)=\H_{<1}\oplus\H_1\oplus\H_{>1}
    \]
    on the second crystalline cohomology of $X$. This is a decomposition into $\F$-crystals having slopes as indicated in the subscripts. The summands on the right are free $\W$-modules, of ranks $h, 22-2h, h$. Because the crystalline cohomology of $X$ is torsion free, the canonical map
    \[
        \H^2(X/\W)\twoheadrightarrow\H^2_{\dR}(X)
    \]
    is surjective and identifies $\H^2_{\dR}$ with the reduction modulo $p$ of $\H^2(X/\W)$. Let $\overline{\H}_{<1}$, $\overline{\H}_1$, and $\overline{\H}_{>1}$ denote the reductions modulo $p$ of $\H_{<1}$, $\H_1$, and $\H_{>1}$. We get a direct sum decomposition of $k$-vector spaces
    \[
        \H^2_{\dR}(X)=\overline{\H}_{<1}\oplus\overline{\H}_1\oplus\overline{\H}_{>1}
    \]
    having dimensions $h,22-2h,h$. It follows from Mazur's theorem relating the Frobenius on $\H^2(X/\W)$ to the Hodge filtration on $\H^2_{\dR}$ that $\F^2\subset\overline{\H}_{>1}$ (see eg. \cite[8.26, 8.30]{BO78}). On the other hand, the crystalline Chern character, which lifts the de Rham Chern character, factors through $\H_1$. Thus the de Rham Chern character factors through $\overline{\H}_1$, and so $\P\subset\overline{\H}_1$. Therefore $\P\cap\F^2=0$.
\end{proof}

\begin{corollary}
     If $k$ has characteristic 0, or if $k$ has positive characteristic and $X$ has finite height, then for any snc divisor $D\subset X$ the logarithmic Hodge--de Rham spectral sequence for $(X,D)$ is degenerate.
\end{corollary}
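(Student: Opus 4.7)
The plan is to combine Theorem \ref{thm:degeneration criterion} with Proposition \ref{prop:easy cases}; the content of the corollary is essentially just the contrapositive of the chain of implications already established. Suppose for contradiction that the log HdR spectral sequence for some snc divisor $D = D_1 + \cdots + D_r$ is nondegenerate. By the equivalence \eqref{item:1}$\iff$\eqref{item:3} of Theorem \ref{thm:degeneration criterion}, the line $\F^2 \subset \H^2_{\dR}(X)$ is contained in the $k$-span $\B$ of the classes $c_1(\ms O(D_i))$.

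Next I would observe that $\B \subset \P$ by definition of $\P$ as the image of $c_1 \otimes k$ on all of $\Pic(X)$, so in particular $\F^2 \subset \P$. Since $\F^2$ is a one-dimensional $k$-vector space, this forces $\P \cap \F^2 = \F^2 \neq 0$. But Proposition \ref{prop:easy cases} asserts exactly that $\P \cap \F^2 = 0$ under either of the hypotheses of the corollary (characteristic zero, or positive characteristic with finite height). This contradiction gives the desired degeneracy.

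There is no real obstacle here — the work has already been done in the degeneration criterion and in the Newton--Hodge / Hodge-theoretic analysis of Proposition \ref{prop:easy cases}. The only thing to be careful about is to invoke the right direction of Theorem \ref{thm:degeneration criterion}, namely that nondegeneracy forces the containment $\F^2 \subset \B$, rather than the other way around.
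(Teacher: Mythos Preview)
Your proposal is correct and matches the paper's approach exactly: the paper's proof is simply ``Apply Proposition \ref{prop:easy cases} to the classes $L_i=[\ms O(D_i)]$ and use Theorem \ref{thm:degeneration criterion},'' which is precisely the contrapositive argument you spell out.
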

\begin{proof}
    Let $D_1,\ldots,D_r$ be the irreducible components of $D$. Apply \ref{prop:easy cases} to the classes $L_i=[\ms O(D_i)]$ and use Theorem \ref{thm:degeneration criterion}.
\end{proof}

\begin{remark}
    In the cases of \ref{prop:easy cases} the map $c_1\otimes k$ is injective, hence induces an isomorphism $\Pic(X)\otimes k\simeq\P$, and so $\P\simeq k^{\rho}$ where $\rho$ is the Picard number of $X$.
\end{remark}
\end{pg}

\begin{pg}
The case when $X$ is supersingular is more interesting. In particular, we will see that here the triviality/nontriviality of the intersection $\B\cap\F^2$ depends on $\B$. To analyze the situation we will use a beautiful bit of semilinear algebra worked out by Ogus \cite{Ogus78}.
\end{pg}
Write $\N=\Pic(X)$. We consider the semilinear endomorphism $\varphi$ of the $k$-vector space $\N\otimes k$ given by
    \[
        \varphi:\N\otimes k\to\N\otimes k,\quad\quad v\otimes\lambda\mapsto v\otimes\lambda^p.
    \]
    As $k$ is perfect $\varphi$ is invertible. The \emph{characteristic subspace} associated to $X$ is defined by
    \[
        \K:=\varphi^{-1}(\ker (c_1\otimes k))\subset \N\otimes k.
    \]
    Thus we have an exact sequence of $k$-vector spaces
    \[
        0\to\varphi(\K)\to \N\otimes k\xrightarrow{c_1\otimes k}\H^2_{\dR}(X).
    \]
    We have
    \begin{equation}\label{eq:char subspace 0}
        \dim_k \K=\sigma_0.
    \end{equation}
    This follows from \cite[3.11]{Ogus78}.\footnote{To make contact with Ogus's notation, we note that Ogus's $\W$-lattice denoted $\mathrm{E}$ satisfies $\Pic(X)\otimes_{\bZ}\W=\mathrm{E}$. The identification of our $\K$ with Ogus's follows from the diagram \cite[3.16.2]{Ogus78}.} We have $1\leq\sigma_0\leq 10$, and so in particular $c_1\otimes k$ is not injective, unlike in the finite height case. (This could be seen more directly by noting that $\Pic(X)\otimes k$ is a $k$-vector space of dimension 22, and that $c_1$ factors through $\F^1$, which has dimension only $21$.)
    
    The following facts are consequences of \cite[3.12]{Ogus78} (see also \cite[3.19, 3.20]{Ogus78}). First, the subspace $\K$ is in a special position with respect to the operator $\varphi$; specifically, we have
    \begin{equation}\label{eq:char subspace}
        \dim_k(\K+\varphi(\K))=\sigma_0+1,
    \end{equation}
    and letting $\U:=\sum_{i\geq 0}\varphi^i(\K)\subset\N\otimes k$ denote the smallest $\varphi$-stable subspace of $\N\otimes k$ containing $\K$, we have
    \begin{equation}\label{eq:char subspace 2}
        \dim_k(\U)=2\sigma_0.
    \end{equation}
    Here, we say that a $k$-subspace $\V\subset\N\otimes k$ is \emph{$\varphi$-stable} if $\varphi(\V)\subset\V$ (equivalently, $\varphi(\V)=\V$). This is equivalent to the existence of an $\bF_p$-subspace $\T\subset\N\otimes\bF_p$ such that $\V=\T\otimes_{\bF_p}k$. Second, the subspace $\U$ can be described in a different way using the intersection pairing on $\N$. The discriminant group $\N^{\vee}/\N$ is killed by $p$, so we have the containment $p\N^{\vee}\subset\N$. Explicitly, $p\N^{\vee}$ is the subgroup of classes $L\in\N$ such that $L.M$ is divisible by $p$ for all $M\in\N$. We let
    \[
        \N_0:=\frac{p\N^{\vee}}{p\N}\subset\frac{\N}{p\N}=\N\otimes\bF_p
    \]
    denote its image modulo $p$. Multiplication by $p$ induces an isomorphism $\N^{\vee}/\N\simeq\N_0$, so $\N_0$ is an $\bF_p$-vector space of dimension $2\sigma_0$. Moreover, we have
    \[
    	\U=\N_0\otimes_{\bF_p}k
    \]
    as $k$-subspaces of $\N\otimes k$ \cite[3.16]{Ogus78}. Finally, in contrast to the finite height case, when $X$ is supersingular the second piece of the Hodge filtration is contained in $\P$. In fact, we even have $\F^2\subset\P_0$, where $\P_0$ is the image of $\U$ under $c_1\otimes k$.
    As a consequence of the relation between $\varphi$ and the Cartier operator, it follows that $c_1\otimes k$ maps $\K+\varphi(\K)$ to $\F^2$, and moreover we have a commutative diagram of $k$-vector spaces
  	\begin{equation}\label{eq:diagram c_1 dr again}
  		\begin{tikzcd}
  			0\arrow{r}&\varphi(\K)\arrow[d,phantom,sloped,"="]\arrow{r}&\K+\varphi(\K)\arrow[d,phantom,sloped,"\subset"]\arrow{r}{c_1\otimes k}&\F^2\arrow[d,phantom,sloped,"\subset"]\arrow{r}&0\\
  			0\arrow{r}&\varphi(\K)\arrow[d,phantom,sloped,"="]\arrow{r}&\U\arrow[d,phantom,sloped,"\subset"]\arrow{r}{c_1\otimes k}&\P_0\arrow[d,phantom,sloped,"\subset"]\arrow{r}&0\\
  			0\arrow{r}&\varphi(\K)\arrow{r}&\N\otimes k\arrow{r}{c_1\otimes k}&\P\arrow{r}\arrow[d,phantom,sloped,"\subset"]&0\\
  			&&&\F^1\arrow[d,phantom,sloped,"\subset"]&\\
  			&&&\H^2_{\dR}
  		\end{tikzcd}
  	\end{equation}
    with exact rows \cite[3.20.1]{Ogus78}. In particular, from the top two rows we see that
  	\[
  		\K+\varphi(\K)=(c_1\otimes k)^{-1}(\F^2).
  	\]
    
    So, to understand the intersection $\B\cap\F^2$ we need to understand the relative position of $\A$ with respect to the subspaces $\K+\varphi(\K)$ and $\varphi(\K)$. Actually, and maybe surprisingly, we will see that all that matters is the relative position of $\A$ with respect to $\U$. To explain this it is clarifying to consider more generally the intersection of $\A$ with the terms of a certain complete flag in $\U$ extending the filtration $0\subset \K\subset \K+\varphi(\K)\subset \U$. We define an ascending $\bZ$-indexed filtration 
    \[
        \cdots\subset\U_m\subset\U_{m+1}\subset\cdots\subset\U
    \]
    by
   	\[
   		\U_m=\begin{cases}\varphi^{-\sigma_0+m}(\K)\cap\ldots\cap \K,&\quad\text{if }m\leq\sigma_0,\\
   		\K+\ldots+\varphi^m(\K),&\quad\text{if }m\geq\sigma_0.
   		\end{cases}
   	\]
   	In particular, we have $\U_{\sigma_0}=\K$ and $\U_{\sigma_0+1}=\K+\varphi(\K)$.
  
    \begin{lemma}\label{lem:U lemma}
    	For $0\leq m\leq 2\sigma_0$ we have $\dim \U_m=m$.
    \end{lemma}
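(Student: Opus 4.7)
The plan is to split the claim into the range $\sigma_0\leq m\leq 2\sigma_0$ (handled by induction on $j:=m-\sigma_0$) and the range $0\leq m\leq\sigma_0$ (deduced via a duality argument from the first range).

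For $m\geq\sigma_0$ the claim $\dim\U_m=m$ becomes $\dim(\K+\varphi(\K)+\cdots+\varphi^j(\K))=\sigma_0+j$. Base cases $j=0,1$ are~\eqref{eq:char subspace 0} and~\eqref{eq:char subspace}. For the inductive step I would establish two bounds on the transition from $\U_m$ to $\U_{m+1}=\U_m+\varphi^{j+1}(\K)$. \emph{Step-size bound:} inclusion-exclusion applied to~\eqref{eq:char subspace} gives $\dim(\K\cap\varphi(\K))=\sigma_0-1$, and applying the semilinear isomorphism $\varphi^j$ exhibits a $(\sigma_0-1)$-dimensional subspace $\varphi^j(\K\cap\varphi(\K))=\varphi^j(\K)\cap\varphi^{j+1}(\K)$ inside $\U_m\cap\varphi^{j+1}(\K)$; another application of inclusion-exclusion then forces $\dim\U_{m+1}-\dim\U_m\leq 1$. \emph{Strictness:} if $\U_{m+1}=\U_m$, then $\varphi^{j+1}(\K)\subset\U_m$, and combined with $\varphi(\K)+\cdots+\varphi^j(\K)\subset\U_m$ this gives $\varphi(\U_m)\subset\U_m$; since $\U_m\supset\K$, minimality of $\U$ forces $\U_m=\U$, hence $\dim\U_m=2\sigma_0$. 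Combining these with $\dim\U_{\sigma_0}=\sigma_0$ forces the dimension to grow by exactly $1$ at each of the $\sigma_0$ steps $m=\sigma_0,\ldots,2\sigma_0$.

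For $0\leq m\leq\sigma_0$, applying the semilinear isomorphism $\varphi^{\sigma_0-m}$ converts the claim to $\dim(\K\cap\varphi(\K)\cap\cdots\cap\varphi^{\sigma_0-m}(\K))=m$, which I would deduce from the sum case by duality. The identification $\U=\N_0\otimes_{\bF_p}k$ from~\eqref{eq:char subspace 2} endows $\U$ with a non-degenerate $k$-valued symmetric bilinear form, obtained by rescaling the intersection form on $p\N^\vee$ by $1/p$ (so that it becomes integer-valued) and reducing modulo $p$. This form is $\varphi$-invariant (in the semilinear sense that $\langle\varphi v,\varphi w\rangle=\langle v,w\rangle^p$) because it descends from an $\bF_p$-valued form on $\N_0$. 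A key input from Ogus's analysis of characteristic subspaces is that $\K$ is Lagrangian in $\U$, i.e., $\K^\perp=\K$; this reflects the compatibility of $c_1\otimes k$ with the intersection pairing. Consequently each $\varphi^j(\K)$ is Lagrangian as well, so the orthogonal of $\K\cap\varphi(\K)\cap\cdots\cap\varphi^{\sigma_0-m}(\K)$ in $\U$ equals $\K+\varphi(\K)+\cdots+\varphi^{\sigma_0-m}(\K)=\U_{2\sigma_0-m}$, which has dimension $2\sigma_0-m$ by the sum case. Non-degeneracy of the form on $\U$ then gives the desired dimension $m$.

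The hard part will be invoking the Lagrangian property of $\K$ in $\U$: while this is standard in the theory of supersingular K3 crystals, it is not explicitly recorded in the excerpt, so a full write-up would need either a precise citation in Ogus or a short self-contained argument deriving $\K^\perp=\K$ from the relationship between the kernel of $c_1\otimes k$ and the pairing modulo $p$. A purely elementary alternative would try to mimic the inductive step for the intersections directly, but the analogous upper bound $\dim\mathcal{I}_i-\dim\mathcal{I}_{i+1}\leq 1$ does not follow as transparently from the stated hypotheses~\eqref{eq:char subspace},~\eqref{eq:char subspace 2}, so the duality route seems the more efficient path.
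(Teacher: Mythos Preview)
Your argument is correct. For the sum range $\sigma_0\le m\le 2\sigma_0$ your step-size/strictness induction is exactly the induction the paper has in mind. For the intersection range you bring in a genuine extra ingredient --- the Lagrangian property $\K^\perp=\K$ inside $\U=\N_0\otimes_{\bF_p}k$ --- and deduce $\dim\U_m=m$ by orthogonality with the already-established sum case. The paper's proof, by contrast, is a one-line sketch that only names~\eqref{eq:char subspace 0},~\eqref{eq:char subspace},~\eqref{eq:char subspace 2} and the invertibility of $\varphi$; your duality argument is not what the paper writes, but it is a clean and standard way to close the intersection half, and (as you note) it is justified by Ogus's description of characteristic subspaces as totally isotropic of half dimension.

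One small correction to your diagnosis in the last paragraph. The step-size bound for intersections \emph{is} transparent from the stated hypotheses: since $\I_j\subset\varphi^j(\K)$ and $\varphi^j(\K)\cap\varphi^{j+1}(\K)$ is a hyperplane in $\varphi^j(\K)$ by~\eqref{eq:char subspace}, one gets $\dim\I_j-\dim\I_{j+1}\le 1$ immediately. The real obstruction to a purely elementary proof is the \emph{strictness} step: if $\I_{j+1}=\I_j$ then $\I_j$ is $\varphi$-stable and contained in $\K$, and one needs to know that no nonzero such subspace exists. This does \emph{not} follow from~\eqref{eq:char subspace 0}--\eqref{eq:char subspace 2} alone: for instance, with $\sigma_0=2$ and $\U=k^4$ one can take $\K=\mathrm{span}(e_1,\,e_2+\alpha e_3+\beta e_4)$ with $e_1$ a $\varphi$-fixed vector and generic $\alpha,\beta$, and then all three conditions hold while $\bigcap_i\varphi^i(\K)=\mathrm{span}(e_1)\neq 0$. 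So your instinct that the intersection case needs something beyond the three displayed facts is right; it is just the strictness, not the step-size, that requires the Lagrangian input (equivalently, the fact that $\bigcap_i\varphi^i(\K)=0$, which your orthogonality argument proves in one line).
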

    \begin{proof}
        When $m=\sigma_0$ this is~\eqref{eq:char subspace 0}. The claim follows from this using induction combined with the relations~\eqref{eq:char subspace} and~\eqref{eq:char subspace 2} and the fact that $\varphi$ is invertible.
    \end{proof}
The above shows that we have $\U_m=0$ for $m\leq 0$ and $\U_m=\U$ for $m\geq 2\sigma_0$, and the $\U_m$ filtration looks like this:
\begin{equation}\label{eq:U filtration}
	\begin{tikzcd}[column sep=small]
		0\arrow[r,phantom,"="]&\U_0\arrow[r,phantom,"\subset"]&\U_1\arrow[r,phantom,"\subset"]&\cdots\arrow[r,phantom,"\subset"]&\U_{\sigma_0}\arrow[d,phantom,sloped,"="]\arrow[r,phantom,"\subset"]&\U_{\sigma_0+1}\arrow[d,phantom,sloped,"="]\arrow[r,phantom,sloped,"\subset"]&\cdots\arrow[r,phantom,"\subset"]&\U_{2\sigma_0}\arrow[r,phantom,"="]&\U\\
		&&&&\K\arrow[r,phantom,sloped,"\subset"]&\K+\varphi(\K).
	\end{tikzcd}
\end{equation}
Moreover, each term in the top row has codimension 1 in its successor. This implies that the $\U_m$ satisfy the recurrence relations
        \begin{align*}
            \U_{m+1}\,\,&=\U_m+\varphi(\U_m),\quad m>0,\\
            \varphi(\U_{m-1})&=\U_m\cap\varphi(\U_m),\quad m<2\sigma_0.
        \end{align*}
Another way to say all of this is that $\U_1$ has dimension 1, and if $e\in\U_1$ is a generator and $e_m:=\varphi^{m-1}(e)$, then for $1\leq m\leq 2\sigma_0$ the vectors $(e_1,\ldots,e_m)$ are a basis for $\U_m$.

\begin{remark}
    The $\U_m$ can be interpreted more intrinsically in terms of cohomology groups of sheaves of higher closed forms on $X$; see \cite[\S11]{MR1776939}.
\end{remark}

The following shows that the intersection of a $\varphi$-stable subspace of $\U$ with one of the $\U_m$ is as small as possible.

    \begin{lemma}\label{lem:dimension bounds}
        If $\V\subset\U$ is a $\varphi$-stable subspace of dimension $s_0$, then for $0\leq m\leq 2\sigma_0$ we have
        \[
            \dim(\V\cap \U_m)=\begin{cases}
                0,&\quad\text{if }s_0+m<2\sigma_0,\\
               s_0+m-2\sigma_0,&\quad\text{if }s_0+m\geq 2\sigma_0.
            \end{cases}
        \]
        In particular, the dimension of $\V\cap\U_m$ depends only on $m$ and the dimension of $\V$.
    \end{lemma}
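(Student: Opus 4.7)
The plan is to introduce, for each nonzero $v \in \U$, the filtration degree $\mu(v) := \min\{m \geq 0 : v \in \U_m\}$, and then exploit the $\varphi$-stability of $\V$ by iterating $\varphi$ on a hypothetical small-degree vector. The whole argument reduces to a statement about how $\varphi$ shifts filtration degree.

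First I would establish the key structural fact that $\varphi$ induces a $\varphi$-semilinear isomorphism $\U_m/\U_{m-1} \xrightarrow{\sim} \U_{m+1}/\U_m$ of one-dimensional graded pieces for $1 \leq m < 2\sigma_0$. Surjectivity is immediate from the recurrence $\U_{m+1} = \U_m + \varphi(\U_m)$, and the kernel of the composite $\U_m \xrightarrow{\varphi} \U_{m+1} \twoheadrightarrow \U_{m+1}/\U_m$ is $\varphi^{-1}(\U_m \cap \varphi(\U_m)) = \varphi^{-1}(\varphi(\U_{m-1})) = \U_{m-1}$, using the second recurrence. An immediate consequence: for any $v \neq 0$ with $\mu(v) < 2\sigma_0$, one has $\mu(\varphi(v)) = \mu(v) + 1$.

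The main obstacle is then the vanishing statement $\V \cap \U_m = 0$ for all $m \leq 2\sigma_0 - s_0$. Suppose for contradiction there is a nonzero $v \in \V \cap \U_m$ for such an $m$, and set $m_0 := \mu(v) \leq m \leq 2\sigma_0 - s_0$. By $\varphi$-stability of $\V$, the vectors $v, \varphi(v), \varphi^2(v), \ldots, \varphi^{2\sigma_0 - m_0}(v)$ all lie in $\V$, and by the previous paragraph they have pairwise distinct $\mu$-values $m_0, m_0 + 1, \ldots, 2\sigma_0$. Vectors with distinct $\mu$-values are automatically linearly independent (a standard upper-triangular argument on the filtration), so this produces $2\sigma_0 - m_0 + 1 \geq s_0 + 1$ independent vectors in $\V$, contradicting $\dim_k \V = s_0$.

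For the complementary range $2\sigma_0 - s_0 \leq m \leq 2\sigma_0$, the equality $\dim(\V \cap \U_m) = s_0 + m - 2\sigma_0$ is then forced by squeezing between two elementary bounds: the general lower bound $\dim(\V \cap \U_m) \geq s_0 + m - 2\sigma_0$ coming from $\dim(\V + \U_m) \leq \dim \U = 2\sigma_0$, and the step-size bound $\dim(\V \cap \U_{m+1}) - \dim(\V \cap \U_m) \leq 1$ coming from $\dim \U_{m+1}/\U_m = 1$, anchored at $\dim(\V \cap \U_{2\sigma_0 - s_0}) = 0$ just established. Thus the $\varphi$-stability of $\V$ enters only through the vanishing step; the remainder is elementary linear algebra in the complete flag $\U_\bullet$.
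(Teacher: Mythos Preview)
Your proof is correct. Both your argument and the paper's rest on the same underlying fact---that $\varphi$ moves elements up by exactly one step in the flag $\U_\bullet$, as encoded in the recurrences---but you deploy it differently. The paper works subspace-wise: setting $\V_m = \V \cap \U_m$, it shows that whenever $\V_m = \V_{m+1}$ the equality propagates downward (using the recurrences to get $\varphi(\V_m) = \V_m$ and then $\varphi(\V_{m-1}) = \varphi(\V_m)$), forcing all equalities in the chain $0 = \V_0 \subset \cdots \subset \V_{2\sigma_0} = \V$ to occur at the bottom. You instead work element-wise: a nonzero $v \in \V$ of small filtration degree $\mu(v) = m_0$ yields the independent family $v, \varphi(v), \ldots, \varphi^{2\sigma_0 - m_0}(v)$ inside $\V$, overcounting its dimension. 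Your approach is a bit more concrete, exhibiting an explicit independent family; the paper's is slightly slicker in that once the equalities-first structure is established, both ranges of $m$ fall out uniformly without the separate squeeze argument you give for $m \geq 2\sigma_0 - s_0$.
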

    \begin{proof}
        Set $\V_m=\V\cap \U_m$. Consider the intersection of $\V$ with the terms of the filtration~\eqref{eq:U filtration}. It looks like this:
        \begin{equation}\label{eq:V filtration}
            0=\V_0\subset \V_1\subset\cdots\subset \V_{2\sigma_0}=\V.
        \end{equation}
        Each of the inclusions increases dimension by at most $1$, and $\V_{2\sigma_0}$ has dimension $s_0$, so there must be exactly $s_0$ inclusions that increase dimension by $1$, with the remaining $2\sigma_0-s_0$ inclusions being equalities. We claim that the equalities must all occur first. Suppose that $0\leq m\leq 2\sigma_0-1$ is an index such that $\V_m=\V_{m+1}$. As $\V$ is stable under $\varphi$ we have
        \[
            \varphi(\V_m)=\varphi(\V\cap\U_m)=\V\cap\varphi(\U_{m})\subset\V\cap\U_{m+1}=\V_{m+1}.
        \]
        Thus $\varphi(\V_m)\subset\V_m$, so $\varphi(\V_m)=\V_m$. We obtain
        \[
        	\varphi(\V_{m-1})=\varphi(\V\cap\U_{m-1})=\V\cap\varphi(\U_{m-1})=\V\cap(\U_m\cap\varphi(\U_m))=\V_m\cap\varphi(\V_m)=\varphi(\V_m).
        \]
        Therefore $\V_{m-1}=\V_{m}$. Applying this repeatedly we get $\V_{m+1}=\V_m=\V_{m-1}=\ldots=\V_1=\V_0=0$, which proves the claim. We conclude that~\eqref{eq:V filtration} looks like
        \[
            0=\V_0=\V_1=\ldots=\V_{2\sigma_0-s_0}\subsetneq \V_{2\sigma_0-s_0+1}\subsetneq\ldots\subsetneq \V_{2\sigma_0}=\V
        \]
        where each of the nontrivial inclusions increases the dimension by $1$. So, for $m<2\sigma_0-s_0$ we have $\V_m=0$, and for $m\geq 2\sigma_0-s_0$ we have $\dim(\V_m)=m-(2\sigma_0-s_0)$. After rearranging, we see this is what we wanted to prove. 
    \end{proof}

    We can now answer our question. We refresh the notation: $L_1,\ldots,L_r$ are classes 
    in $\Pic(X)=\N$, $\A\subset\N\otimes k$ is the $k$-subspace they generate, and $\B\subset\H^2_{\dR}(X)$ is the $k$-subspace generated by $c_1(L_1),\ldots,c_1(L_r)$. Also, let $\C\subset\H^1(X,\Omega^1_X)$ be the $k$-subspace spanned by the Hodge Chern classes $c_1^{\H}(\ms O(D_i))$. Set $s=\dim(\A)$ and $s_0=\dim(\A\cap\U)$.
    \begin{proposition}\label{prop:sigma condition}
        The dimensions of $\B,\B\cap\F^2,$ and $\C$ are given by
        \[
            \dim(\B)=\begin{cases}
                s,&\text{if }s_0<\sigma_0,\\
                s-s_0+\sigma_0,&\text{if }s_0\geq\sigma_0,
            \end{cases}
            \quad\quad\quad
             \dim(\B\cap\F^2)=\begin{cases}
                0,&\text{if }s_0<\sigma_0,\\
                1,&\text{if }s_0\geq\sigma_0,
            \end{cases}
        \]
        \[
            \text{and}\quad\dim(\C)=\begin{cases}
                s,&\text{if }s_0<\sigma_0,\\
                s-s_0+\sigma_0-1,&\text{if }s_0\geq\sigma_0.
            \end{cases}
        \]
        In particular, $\dim(\B)$ and $\dim(\C)$ depend only on $s$ and $s_0$, and $\dim(\B\cap\F^2)$ depends only on $s_0$.
    \end{proposition}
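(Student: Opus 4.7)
The central structural observation is that the $k$-subspace $\A\subset\N\otimes k$ is $\varphi$-stable. Indeed, each $L_i\otimes 1$ is fixed by the (semilinear) Frobenius $\varphi$, and for any $k$-linear combination we have $\varphi(\sum\lambda_iL_i)=\sum\lambda_i^pL_i\in\A$; since $\varphi$ is invertible and $\A$ is finite dimensional, $\varphi(\A)=\A$. Consequently $\V:=\A\cap\U$ is a $\varphi$-stable subspace of $\U$ of dimension $s_0$, so Lemma \ref{lem:dimension bounds} applies to $\V$ and computes $\dim(\V\cap\U_m)$ for every $m$. The rest of the proof is a matter of expressing the three dimensions in question as differences of dimensions of intersections $\A\cap\U_m$, and invoking the lemma.

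For $\dim(\B)$, I would argue as follows. Since the kernel of $c_1\otimes k$ is $\varphi(\K)$ (by the exact sequence before~\eqref{eq:char subspace 0}) and $\varphi(\K)\subset\U$, one has $\A\cap\ker(c_1\otimes k)=\A\cap\varphi(\K)=\V\cap\varphi(\K)$. Using that $\V$ is $\varphi$-stable, $\varphi(\V\cap\U_{\sigma_0})=\V\cap\varphi(\U_{\sigma_0})=\V\cap\varphi(\K)$, so $\dim(\V\cap\varphi(\K))=\dim(\V\cap\U_{\sigma_0})$, which by Lemma \ref{lem:dimension bounds} equals $0$ if $s_0<\sigma_0$ and $s_0-\sigma_0$ otherwise. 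Therefore $\dim(\B)=s-\dim(\V\cap\U_{\sigma_0})$ gives the stated formula.

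For $\dim(\B\cap\F^2)$, the identity $(c_1\otimes k)^{-1}(\F^2)=\K+\varphi(\K)=\U_{\sigma_0+1}$ from~\eqref{eq:diagram c_1 dr again} gives $\B\cap\F^2=(c_1\otimes k)(\A\cap\U_{\sigma_0+1})$, hence $\dim(\B\cap\F^2)=\dim(\A\cap\U_{\sigma_0+1})-\dim(\A\cap\varphi(\K))$. Both terms are values of $\dim(\V\cap\U_m)$ (for $m=\sigma_0+1$ and $m=\sigma_0$ respectively), and a short case analysis via Lemma \ref{lem:dimension bounds} shows the difference is $0$ when $s_0<\sigma_0$ and $1$ when $s_0\geq\sigma_0$. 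For $\dim(\C)$, note that $c_1^{\H}\otimes k$ is the composition of $c_1\otimes k$ with the projection $\F^1\to\F^1/\F^2=\H^1(X,\Omega^1_X)$, so its kernel on $\N\otimes k$ is precisely $(c_1\otimes k)^{-1}(\F^2)=\U_{\sigma_0+1}$. Hence $\dim(\C)=s-\dim(\A\cap\U_{\sigma_0+1})$, and applying Lemma \ref{lem:dimension bounds} with $m=\sigma_0+1$ yields the claimed formula.

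The main conceptual step — and the only one that requires real input beyond bookkeeping — is the reduction of all three dimension counts to intersections of $\A$ with the terms $\U_m$ of Ogus's flag, which in turn relies on $\varphi$-stability of $\A$ to replace $\varphi(\K)$ with $\U_{\sigma_0}$ inside $\V$. Everything else is a routine application of the rank formula $\dim(\B)=\dim(\A)-\dim(\A\cap\ker)$ together with the combinatorial input provided by Lemma \ref{lem:dimension bounds}.
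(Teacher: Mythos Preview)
Your proposal is correct and follows essentially the same route as the paper: identify $\B\simeq\A/(\A\cap\varphi(\K))$, $\B\cap\F^2\simeq(\A\cap(\K+\varphi(\K)))/(\A\cap\varphi(\K))$, and $\C\simeq\A/(\A\cap(\K+\varphi(\K)))$, then use $\varphi$-stability of $\V=\A\cap\U$ to reduce $\dim(\V\cap\varphi(\K))$ to $\dim(\V\cap\U_{\sigma_0})$ and invoke Lemma~\ref{lem:dimension bounds} at $m=\sigma_0,\sigma_0+1$. The only cosmetic difference is that you make explicit the reason $\V$ is $\varphi$-stable (namely that $\A$ itself is, being generated by $\varphi$-fixed vectors), which the paper leaves implicit.
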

    \begin{proof}
    Note that $\B=(c_1\otimes k)(\A)$ and $\C=\pi(\B)$, where $\pi:\F^1\to\F^1/\F^2=\H^1(X,\Omega^1_X)$ is the quotient. The situation looks like this:
    \[
        \begin{tikzcd}
            \A\arrow[d,phantom,sloped,"\subset"]\arrow[two heads]{r}&\B\arrow[d,phantom,sloped,"\subset"]\arrow[two heads]{r}&\C\arrow[d,phantom,sloped,"\subset"]\\
            \Pic(X)\otimes k\arrow{r}{c_1\otimes k}&\F^1\arrow{r}{\pi}&\H^1(X,\Omega^1_X).
        \end{tikzcd}
    \]
    The kernel of $c_1\otimes k$ is $\varphi(\K)$, the kernel of $\pi$ is $\F^2$, and the kernel of the composition is $\K+\varphi(\K)$. So, we have $\B\simeq\A/(\A\cap\varphi(\K))$, $\B\cap\F^2=(\A\cap(\K+\varphi(\K)))/(\A\cap\varphi(\K))$, and $\C\simeq\B/(\B\cap\F^2)\simeq\A/(\A\cap(\K+\varphi(\K)))$. Therefore it will suffice to compute the dimensions of $\A\cap\varphi(\K)$ and $\A\cap(\K+\varphi(\K))$. As $\varphi(\K)$ and $\K+\varphi(\K)$ are contained in $\U$, these subspaces are equal to $\V\cap\varphi(\K)$ and $\V\cap(\K+\varphi(\K))$ respectively, where $\V=\A\cap\U$. Note that $\V$ is $\varphi$-stable, so we have $\dim(\V\cap\K)=\dim(\V\cap\varphi(\K))$. We now apply \ref{lem:dimension bounds} to $\V$ with $m=\sigma_0$ and $m=\sigma_0+1$. When $s_0<\sigma_0$ this gives $\V\cap(\K+\varphi(\K))=\V\cap\varphi(\K)=0$, and when $s_0\geq\sigma_0$ this gives $\dim(\V\cap\varphi(\K))=s_0-\sigma_0$ and $\dim(\V\cap(\K+\varphi(\K)))=s_0-\sigma_0+1$. This implies the result.
    \end{proof}


    \begin{remark}\label{rem:chern mod p}
        Proposition \ref{prop:sigma condition} shows that if $\dim(\A\cap\U)<\sigma_0$ (eg. if $\dim(\A)<\sigma_0$), then the de Rham Chern character restricts to an injection $c_1\otimes k:\A\hookrightarrow\H^2_{\dR}(X)$, and even the Hodge Chern character restricts to an injection $c_1^{\H}\otimes k:\A\hookrightarrow\H^1(X,\Omega^1_X)$.
        As a special case, applying this to a single class $L\in\Pic(X)$ shows that the map
        \[
            c_1\otimes\bF_p:\Pic(X)\otimes\bF_p\to\H^2_{\dR}(X)
        \]
        is injective, and that if $\sigma_0\geq 2$ also the map
        \[
            c_1^{\H}\otimes\bF_p:\Pic(X)\otimes\bF_p\to\H^1(X,\Omega^1_X)
        \]
        is injective. This recovers \cite[1.4]{Ogus78}.
    \end{remark}

\begin{pg}
    Let $D\subset X$ be an snc divisor with irreducible components $D_1,\ldots,D_r$ and let $\N_D\subset\N\otimes\bF_p$ denote the $\bF_p$-subspace generated by the images of the Picard classes of the $\ms O(D_i)$ modulo $p$.
\begin{theorem}\label{thm:F2condition}
    The conditions of Theorem \ref{thm:degeneration criterion} hold if and only if $X$ is supersingular and $\dim_{\bF_p}(\N_D\cap\N_0)\geq\sigma_0$.
\end{theorem}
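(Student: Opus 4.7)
The plan is to combine the finite height vanishing of Proposition \ref{prop:easy cases} with the dimension formulas of Proposition \ref{prop:sigma condition}, after identifying the linear-algebraic invariant $s_0=\dim_k(\A\cap\U)$ with the combinatorial invariant $\dim_{\bF_p}(\N_D\cap\N_0)$.

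First I would dispense with the finite height (and characteristic $0$) cases. By Theorem \ref{thm:degeneration criterion}, the equivalent conditions are equivalent to $\F^2\subset\B$. Since $\B\subset\P$ always, this forces $\P\cap\F^2=\F^2\neq 0$. But Proposition \ref{prop:easy cases} says that $\P\cap\F^2=0$ whenever $X$ has finite height, so in that case the conditions fail for any $D$. This shows the forward implication requires $X$ to be supersingular.

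Next, assume $X$ is supersingular. Let $L_i=[\ms O(D_i)]\in\Pic(X)=\N$, let $\A\subset\N\otimes k$ be their $k$-span, and set $\B=(c_1\otimes k)(\A)$. Because $\F^2$ is $1$-dimensional, condition~\eqref{item:3} of Theorem \ref{thm:degeneration criterion} is equivalent to $\dim(\B\cap\F^2)=1$. Proposition \ref{prop:sigma condition} expresses this in terms of $s_0=\dim_k(\A\cap\U)$: it holds if and only if $s_0\geq\sigma_0$. So the task reduces to identifying $s_0$ with $\dim_{\bF_p}(\N_D\cap\N_0)$.

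The final step is this identification. By construction, $\A\subset\N\otimes k=(\N\otimes\bF_p)\otimes_{\bF_p}k$ is the $k$-span of the images $\overline{L_i}\in\N\otimes\bF_p$, which is exactly $\N_D\otimes_{\bF_p}k$. On the other hand, recall from the discussion preceding Lemma \ref{lem:U lemma} that $\U=\N_0\otimes_{\bF_p}k$. Since the formation of intersections of $\bF_p$-subspaces is compatible with the flat extension $\bF_p\to k$, we obtain
\[
    \A\cap\U=(\N_D\otimes_{\bF_p}k)\cap(\N_0\otimes_{\bF_p}k)=(\N_D\cap\N_0)\otimes_{\bF_p}k,
\]
so $s_0=\dim_{\bF_p}(\N_D\cap\N_0)$. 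Combining the three steps finishes the proof. The only potentially delicate point is the identification $\A=\N_D\otimes_{\bF_p}k$ as subspaces of $\N\otimes k$, but this is immediate once one observes that the $L_i$ mod $p$ are by definition generators of $\N_D$ and that extension of scalars along $\bF_p\hookrightarrow k$ preserves spans.
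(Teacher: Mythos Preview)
Your proof is correct and follows essentially the same route as the paper: reduce to the supersingular case via Proposition~\ref{prop:easy cases}, then invoke Proposition~\ref{prop:sigma condition} to translate condition~\eqref{item:3} of Theorem~\ref{thm:degeneration criterion} into $s_0\geq\sigma_0$, and finally identify $s_0=\dim_{\bF_p}(\N_D\cap\N_0)$ using $\A=\N_D\otimes_{\bF_p}k$ and $\U=\N_0\otimes_{\bF_p}k$. Your added remark that intersections of $\bF_p$-subspaces are preserved under the flat base change $\bF_p\hookrightarrow k$ makes explicit a step the paper leaves implicit, but otherwise the arguments coincide.
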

\begin{proof}
    Let $\A\subset\N\otimes k$ be the $k$-subspace generated by the classes of the $\ms O(D_i)$ and let $\B\subset\H^2_{\dR}(X)$ be the subspace generated by the $c_1(\ms O(D_i))$. Condition~\eqref{item:3} of \ref{thm:degeneration criterion} is equivalent to the nontriviality of the intersection $\B\cap\F^2$. By \ref{prop:sigma condition} $\B\cap\F^2$ is nontrivial if and only if $s_0\geq\sigma_0$, where $s_0=\dim_k(\A\cap\U)$. We have $\A=\N_D\otimes_{\bF_p}k$ and $\A\cap\U=(\N_D\cap\N_0)\otimes_{\bF_p}k$, so $s_0=\dim_{\bF_p}(\N_D\cap\N_0)$. Combined with \ref{prop:easy cases} to rule out the possibility of $X$ having finite height, this proves the result.
\end{proof}


For the existence part of Theorem \ref{thm:main thm 1} we will show that any subspace of $\Pic(X)\otimes\bF_p$ can be realized as the span of the reduction modulo $p$ of the classes of the irreducible components of an snc divisor.

\begin{lemma}\label{lem:very ample lemma}
    For any element $v\in\Pic(X)\otimes\bF_p$ there exists a very ample class in $\Pic(X)$ that is equal to $v$ modulo $p$.
\end{lemma}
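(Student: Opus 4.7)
The plan is to reduce this to the standard fact that adding a sufficiently large multiple of a very ample class makes any line bundle very ample, and then arrange the multiple to be divisible by $p$ so that the congruence class modulo $p$ is preserved.

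Concretely, first I would choose an arbitrary lift $L \in \Pic(X)$ of $v$; such a lift exists because $\Pic(X) \twoheadrightarrow \Pic(X) \otimes \bF_p$ is surjective. Next, fix any very ample class $H$ on $X$, which exists because $X$ is projective. For every integer $n \geq 0$ the class $L + pnH$ reduces modulo $p$ to $v$, so it suffices to choose $n$ so that $L + pnH$ is itself very ample.

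For this, I would invoke the standard fact that for a smooth projective variety, a line bundle of the form $L \otimes H^{\otimes m}$ with $H$ very ample becomes very ample for all $m$ greater than some threshold $m_0$ depending on $L$ and $H$ (this follows from Serre vanishing together with the usual criterion that separation of points and tangent vectors can be achieved once a suitable twist is globally generated with vanishing higher cohomology; on a K3 surface it is especially transparent via Saint-Donat-type results, but no K3-specific input is needed). Applying this with $m = pn$, any integer $n$ satisfying $pn \geq m_0$ will do, for instance $n = \lceil m_0/p \rceil$. Then $L + pnH$ is very ample and reduces to $v$ modulo $p$, completing the proof.

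The only potential subtlety is the choice of $n$ as a multiple of $p$, but since the set of admissible exponents is a cofinite subset of $\bN$, it certainly contains multiples of $p$, so there is no real obstacle.
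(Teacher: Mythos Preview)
Your proof is correct and follows essentially the same approach as the paper: lift $v$ to some $L\in\Pic(X)$, fix an ample (or very ample) class, and add a sufficiently large multiple of it that is divisible by $p$, using that high enough twists of $L$ are very ample. The only cosmetic difference is that the paper phrases the very ampleness step as ``globally generated plus very ample is very ample,'' while you invoke the same fact directly.
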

\begin{proof}
    Let $L\in\Pic(X)$ be any lift of $v$. Let $A\in\Pic(X)$ be an ample class. By Serre's theorem $L+nA$ is globally generated and $nA$ is very ample for $n\gg 0$. The sum of a globally generated and a very ample class is very ample, so $L+nA$ is very ample for $n\gg 0$. Choosing $n$ to be both sufficiently large and divisible by $p$ gives the desired lift.
\end{proof}

\begin{lemma}\label{lem:choose some curves}
    If $\T\subset\Pic(X)\otimes\bF_p$ is a sub $\bF_p$-vector space of dimension $r$, then there exists an snc divisor $D\subset X$ with exactly $r$ irreducible components, say $D_1,\ldots,D_r$, such that the images of the classes $[\ms O(D_1)],\ldots,[\ms O(D_r)]\in\Pic(X)$ in $\Pic(X)\otimes\bF_p$ are a basis for $\T$.
\end{lemma}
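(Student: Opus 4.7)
The plan is to lift a basis of $\T$ to very ample classes on $X$ and then produce the components of $D$ as general members of the corresponding linear systems using Bertini-type arguments. Concretely, choose an $\bF_p$-basis $v_1,\ldots,v_r$ of $\T$ and apply Lemma \ref{lem:very ample lemma} to each $v_i$ to obtain very ample classes $L_1,\ldots,L_r\in\Pic(X)$ with $L_i\equiv v_i\pmod p$. By construction, whatever divisors $D_i$ we pick in the linear systems $|L_i|$, the classes $[\ms O(D_i)]=L_i$ will reduce modulo $p$ to the basis $v_1,\ldots,v_r$ of $\T$, as required.

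The remaining task is to choose $D_i\in|L_i|$ so that $D=D_1+\cdots+D_r$ is snc. I will do this inductively in $i$. At each step I need $D_i$ to be (a) smooth, (b) irreducible, (c) transverse to $D_1,\ldots,D_{i-1}$ and to their pairwise intersections. For (a) and (b), since $|L_i|$ is very ample it realizes $X$ as a smooth closed subvariety of some $\bP^N$, and a general hyperplane section of a smooth projective variety embedded in $\bP^N$ is smooth (Bertini for embedded varieties, valid in any characteristic). Irreducibility of a general $D_i$ follows from smoothness plus connectedness; connectedness holds because $X$ is a K3 surface, so $H^1(X,\ms O_X)=0$ and (Kodaira vanishing for very ample line bundles on a K3 surface, which holds in positive characteristic by Saint-Donat or by direct computation using $\omega_X=\ms O_X$ and the Kodaira--Ramanujam vanishing available in this setting) $H^1(X,\ms O_X(-L_i))=0$, so that $H^0(X,\ms O_X)\twoheadrightarrow H^0(D_i,\ms O_{D_i})$ forces $D_i$ to be connected.

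To arrange (c), I proceed inductively. Having chosen $D_1,\ldots,D_{i-1}$ with the snc property, the union $D_1\cup\cdots\cup D_{i-1}$ is a smooth curve away from a finite set of nodal intersection points $P\subset X$. I then need to choose $D_i\in|L_i|$ which (i) is smooth and irreducible, (ii) avoids every point of $P$, and (iii) meets each $D_j$ (for $j<i$) transversely. Condition (ii) is automatic for a general member because each point of $P$ imposes one nontrivial linear condition on the (large) linear system $|L_i|$. Condition (iii) follows from Bertini applied to the restricted linear system $|L_i|_{D_j}$ on the smooth curve $D_j$: a general member of a base-point-free system on a smooth curve consists of reduced points, which is exactly transversality on the surface. (If needed, I can enlarge $L_i$ by adding a sufficiently large multiple $pnA$ for some very ample class $A$, still preserving the reduction mod $p$, to guarantee that the restriction of $|L_i|$ to each $D_j$ is base-point-free.)

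The main obstacle I anticipate is merely the bookkeeping of Bertini in positive characteristic, which could raise worries about generic smoothness; however, the linear systems in question are very ample, so we are slicing an embedded smooth variety (or smooth curve) by generic hyperplanes, and this causes no trouble. Iterating the construction $r$ times produces the desired snc divisor $D=D_1+\cdots+D_r$.
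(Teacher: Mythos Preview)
Your proposal is correct and follows the same approach as the paper: lift a basis of $\T$ to very ample classes via Lemma~\ref{lem:very ample lemma}, then take general members of the linear systems and appeal to Bertini. The paper's own proof is a two-line version of this, simply asserting that a generic choice of $D_i\in|L_i|$ yields an snc divisor; your inductive Bertini argument just spells out that assertion. One minor remark: your detour through $\H^1(X,\ms O_X(-L_i))=0$ for irreducibility is unnecessary, since the connectedness part of Bertini for very ample systems on an irreducible projective variety of dimension $\geq 2$ already gives irreducibility of a general hyperplane section in any characteristic.
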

\begin{proof}
    Let $v_1,\ldots,v_r$ be a basis for $\T$. By Lemma \ref{lem:very ample lemma} we may lift this basis to very ample classes $L_1,\ldots,L_r\in\Pic(X)$. For a generic choice of curves $D_1,\ldots,D_r$ in the corresponding linear systems, the divisor $D=\sum_{i}D_i$ will be snc.
\end{proof}
\end{pg}

\begin{proof}[Proof of Theorem \ref{thm:main thm 1}]
    Suppose that $X$ is supersingular of Artin invariant $\sigma_0$. By Theorem \ref{thm:F2condition}, the log HdR spectral sequence for $(X,D)$ is nondegenerate if and only if $\dim_{\bF_p}(\N_D\cap\N_0)\geq\sigma_0$. For the existence, choose an $\bF_p$-subspace $\T\subset\N_0$ of dimension $\sigma_0$ (as $\N_0$ has dimension $2\sigma_0$ we can certainly do this). By Lemma \ref{lem:choose some curves} we may find an snc divisor $D$ having $\sigma_0$ irreducible components $D_1,\ldots,D_{\sigma_0}$ such that the images modulo $p$ of the classes $[\ms O(D_1)],\ldots,[\ms O(D_{\sigma_0})]\in\Pic(X)$ span $\T$. By adding some extra components to $D$ we can also get an snc divisor with this property with number of components equal to any $r\geq\sigma_0$.
\end{proof}

\begin{proof}[Proof of Theorem \ref{thm:Liouville form}]
    Let $D\subset X$ be an snc divisor such that the differential
    \begin{equation}\label{eq:oh gosh}
        \rd:\H^0(X,\Omega^1_X(\log D))\to\H^0(X,\Omega^2_X(\log D))
    \end{equation}
    is nontrivial. Its image is then equal to the subspace $\H^0(X,\Omega^2_X)\subset\H^0(X,\Omega^2_X(\log D))$. Thus, there exists a meromorphic 1-form $\eta\in\H^0(X,\Omega^1_X(\log D))$ such that $\rd\eta=\omega$. The claim following ``Furthermore'' is taken care of by Theorem \ref{thm:main thm 2}, proved below.
\end{proof}

\begin{proof}[Proof of Theorem \ref{thm:symplectic form theorem}]
    Let $D$ and $\eta$ be as in the proof of \ref{thm:Liouville form} above. The restriction $\eta_U$ of $\eta$ to the complement $U=X\setminus D$ is a regular 1-form. The commutativity of the diagram
    \[
        \begin{tikzcd}
            \H^0(X,\Omega^1_X(\log D))\arrow{d}{\rd}\arrow{r}&\H^0(X,\Omega^1_U)\arrow{d}{\rd}\\
            \H^0(X,\Omega^2_X)\arrow{r}&\H^0(U,\Omega^2_U)
        \end{tikzcd}
    \]
    shows that $\rd\eta_U=\omega_U$. So, we have found a nonempty open subset $U\subset X$ such that $\omega_U$ is exact. Now note that we may choose $D$ so that each of its irreducible components moves in a basepoint free linear system (eg. we may choose each component to be very ample). So we can repeat the above three times with generally chosen components of $D$ and get an open cover of $X$ over which $\omega$ is exact. 
\end{proof}


\begin{proof}[Proof of Theorem \ref{thm:main thm 2}]
    The equivalence of~\eqref{item:sc1},~\eqref{item:sc2}, and~\eqref{item:sc3} is implied by \ref{prop:sigma condition}. Note that the equivalence of the latter two is not obvious -- with the notation of the proof of \ref{prop:sigma condition}, what we are deducing is that the composition $\A\twoheadrightarrow\B\twoheadrightarrow\C$ is non-injective if and only if $\B\twoheadrightarrow\C$ is non-injective (even though $\A\twoheadrightarrow\B$ can also be non-injective). The equivalence of~\eqref{item:sc2} and~\eqref{item:sc4} is \ref{thm:degeneration criterion}, and the equivalence of~\eqref{item:sc4} and~\eqref{item:sc5} follows from our description of the only possibly nontrivial differential in the log HdR spectral sequence.

    It remains to show the equivalence of~\eqref{item:sc7} with$~\eqref{item:sc1}-~\eqref{item:sc5}$. By the logarithmic version of Deligne--Illusie, if the pair $(X,D)$ lifts over $W_2$ then the log HdR spectral sequence for $(X,D)$ is degenerate in degrees $<p$ \cite[4.2]{MR894379}, \cite[10.21]{MR1193913} (specifically, this means that if $a+b<p$ then $\mathrm{E}^{a,b}_1=\mathrm{E}^{a,b}_{\infty}$). This shows that$~\eqref{item:sc4}\implies~\eqref{item:sc7}$. On the other hand, we claim that if $D=\sum_{i=1}^rD_i$ is any snc divisor then $(X,D)$ lifts over $W_2$ if and only if there exists a lift of $X$ over $W_2$ together with the line bundles $\ms O(D_1),\ldots,\ms O(D_r)$. It follows from the deformation theory of line bundles on K3 surfaces that the versal deformation space of $(X,\ms O(D_1),\ldots,\ms O(D_r))$ over $W$ is formally smooth if the subspace of $\H^1(X,\Omega^1_X)$ spanned by the Hodge Chern classes of the $\ms O(D_i)$ has dimension equal to $\dim_{\bF_p}(\N_D)$. Thus, this will show that the negation of~\eqref{item:sc3} implies the negation of~\eqref{item:sc7}, hence$~\eqref{item:sc7}\implies~\eqref{item:sc3}$.
    
    We prove the claim. Given a flat deformation $(X',D')$ of $(X,D)$ over $W_2$, each irreducible component of $D'$ is a Cartier divisor in $X'$ and is a flat deformation of a component of $D$. The line bundles associated to the irreducible components of $D'$ give lifts of the $\ms O(D_i)$. Conversely, suppose that we have a flat deformation of $X$ over $W_2$ together with a lift of each $\ms O(D_i)$. For each $i$ we take cohomology of the sequence
    \[
        0\to\ms O(-D_i)\to\ms O_X\to\ms O_{D_i}\to 0
    \]
    and use that $D_i$ is connected and that $\H^1(X,\ms O_X)=0$ to obtain $\H^1(D_i,\ms O(-D_i))=0$. Thus for each $i$ we may lift the section of $\ms O(D_i)$ defining $D_i$ to a section of the lifted line bundle, and so get a deformation of $D$. This completes the proof.
\end{proof}

\begin{remark}
    Here is another proof of \ref{thm:symplectic form theorem}. Suppose $X$ is supersingular of Artin invariant $\sigma_0$. For $m\geq 1$ let $\B_m\subset\Omega^1_X$ denote the $m$th sheaf of higher boundaries. These forms are all closed, so the differential factors through a map $\Omega^1_X/\B_m\to\Omega^2_X$. One can show that the induced map on global sections
    \begin{equation}\label{eq:B!}
        \rd:\H^0(X,\Omega^1_X/\B_{m})\to\H^0(X,\Omega^2_X)
    \end{equation}
    is surjective for $m\geq\sigma_0$ (and an isomorphism for $m=\sigma_0$). Thus, if $m\geq\sigma_0$ then there exists an element $\overline{\eta}\in\H^0(X,\Omega^1_X/\B_m)$ with $\rd\overline{\eta}=\omega$. As $\B_m$ is coherent, classes in $\H^1(X,\B_m)$ die on a Zariski open cover. So, after passing to an open cover we may lift $\overline{\eta}$ to a regular 1-form whose differential is $\omega$.
\end{remark}

\begin{remark}\label{rem:restriction on dr coho}
    Let $X$ be a supersingular K3 surface. Let $\omega\in\H^0(X,\Omega^2_X)$ be a generator and write $[\omega]\in\H^2_{\dR}(X)$ for the de Rham class of $\omega$. By \ref{thm:symplectic form theorem} there exists an open cover $U\to X$ such that $[\omega]$ is in the kernel of the restriction map $\H^2_{\dR}(X)\to\H^2_{\dR}(U)$. For formal reasons, classes in the cohomology groups $\H^1(X,\Omega^1_X)$ and $\H^2(X,\ms O_X)$ also die on a Zariski cover. Thus after refining $U$ we can find a cover $U\to X$ such that the restriction map $\H^2_{\dR}(X)\to\H^2_{\dR}(U)$ induces the zero map on each graded piece $\F^i/\F^{i+1}$ of the Hodge filtration. Nevertheless, as was pointed out to the author by Ambrosi Emiliano, the restriction map $\H^2_{\dR}(X)\to\H^2_{\dR}(U)$ is always nontrivial. This can be seen by looking at the conjugate filtration instead of the Hodge filtration. Specifically, consider the diagram
    \[
        \begin{tikzcd}
            \H^2_{\dR}(X)\arrow[two heads]{d}\arrow{r}&\H^2_{\dR}(U)\arrow{d}\\
            \H^0(X,\ms H^2(\Omega^{\bullet}_X))\arrow[hook]{r}&\H^0(U,\ms H^2(\Omega^{\bullet}_U)).
        \end{tikzcd}
    \]
    The left vertical arrow is surjective because of the degeneration of the conjugate spectral sequence, and the lower horizontal arrow is injective because $\ms H^2(\Omega^{\bullet}_X)$ is a Zariski sheaf. The Cartier operator gives an isomorphism $\ms H^2(\Omega^{\bullet}_X)\simeq\Omega^2_X$, so in particular $\H^0(X,\ms H^2(\Omega^{\bullet}_X))$ is nonzero. It follows that the restriction map $\H^2_{\dR}(X)\to\H^2_{\dR}(U)$ is nonzero. This even shows something stronger, namely that this map is nontrivial for any nonempty open subset $U\subset X$.

\end{remark}

\begin{remark}\label{rem:exact 2 form}
    The phenomenon in Theorem \ref{thm:symplectic form theorem} cannot happen in characteristic zero. Indeed, suppose that $X$ is any smooth proper variety over an algebraically closed field of characteristic 0 and let $\omega\in\H^0(X,\Omega^2_X)$ be a nonzero 2-form. By the degeneration of the Hodge--de Rham spectral sequence we have $\H^0(X,\Z\Omega^2_X)=\H^0(X,\Omega^2_X)$ and $\omega$ is automatically closed. We claim that the restriction of $\omega$ to any nonempty open subset $U\subset X$ cannot be exact. To see this, let $D=X\setminus U$ be the complement. By shrinking $U$ we may assume that $U$ is affine and $D$ is a divisor, and by taking an embedded resolution we may assume that $D$ is snc. Let $\Q^{\bullet}$ be the quotient in the sequence
    \begin{equation}\label{eq:Q SES}
        0\to\Omega^{\bullet}_X\to\Omega^{\bullet}_X(\log D)\to\Q^{\bullet}\to 0.
    \end{equation}
    We have $\Q^0=0$, and~\eqref{eq:residue SES} shows that $\Q^1\simeq\bigoplus_{i=1}^r\ms O_{D_i}$, where $D_1,\ldots,D_r$ are the irreducible components of $D$. Because the characteristic is zero we have $\Z\ms O_{D_i}=\underline{k}_{D_i}$. It follows that 
    \[
        \H^1(X,\Q^{\bullet})=\H^0(X,\Z\Q^1)=\H^0(X,\textstyle\bigoplus_i\Z\ms O_{D_i})=\H^0(X,\textstyle\bigoplus_i\underline{k}_{D_i})=k^{\oplus r}.
    \]
    Taking cohomology of~\eqref{eq:Q SES} and using that the natural map $\Omega^{\bullet}_X(\log D)\to j_*\Omega^{\bullet}_U$ is a quasi-isomorphism \cite[II 3.13]{MR417174} (where $j:U\hookrightarrow X$ is the inclusion), we get an exact diagram
    \begin{equation}\label{eq:master diagram}
        \begin{tikzcd}
            &&\H^0(U,\Omega^1_U)\arrow{d}{\rd}\\
            &\H^0(X,\Omega^2_X)\arrow[hook]{d}\arrow{r}&\H^0(U,\Z\Omega^2_U)\arrow{d}\\
           k^{\oplus r}\arrow{r}{\delta}&\H^2_{\dR}(X)\arrow{r}&\H^2_{\dR}(U).
        \end{tikzcd}
    \end{equation}
    The exactness of the right column follows from taking cohomology of the short exact sequence of complexes
    \[
        0\to[\ms O_U\xrightarrow{\rd}\Omega^1_U]\to\Omega^{\bullet}_U\to\Omega^{\geq 2}_U\to 0
    \]
    and noting that $\H^2(U,\Omega^{\geq 2}_U)=\H^0(U,\Z\Omega^2_U)$ and that $\H^1(U,\ms O_U)=0$ (we assumed $U$ affine). It follows from \ref{lem:delta} that the boundary map $\delta$ is given by $(\lambda_1,\ldots,\lambda_r)\mapsto\sum_{i}\lambda_ic_1(\ms O(D_i))$. By the Lefschetz principle we may assume that the ground field is the complex numbers. Thus the image of $\delta$ is contained in the subspace of classes of type $(1,1)$. The de Rham class of $\omega$ is of type $(2,0)$ and so is not in the image of $\delta$. Therefore the de Rham class of $\omega_U$ is nontrivial, so by the exactness of the right column $\omega_U$ is not exact.
\end{remark}

\bibliographystyle{plain}
\bibliography{biblio}

\end{document}